\numberwithin{equation}{section}
\newcommand{\digs}{\text{d-}S_{\bar{n}}}
\newcommand{\digp}{\text{d-}P_{\bar{n}}}
\newcommand{\dign}{\text{d-}P^{n_1}}
\newcommand{\digtc}{\text{d-}TC}
\newcommand{\dcat}{\text{d-cat}}
\theoremstyle{plain}
\newtheorem{theo}{Theorem}[section]
\newtheorem{cor}[theo]{Corollary}
\newtheorem{prop}[theo]{Proposition}
\theoremstyle{definition}
\newtheorem{defn}[theo]{Definition}
\newtheorem{ex}[theo]{Example}
\begin{document}
	
\title{Explicit Motion Planning in Digital Projective Product Spaces}

\author{Seher F\.{i}\c{s}ekc\.{i}}
\address{Ege University, Science Faculty, C Block, Department of Mathematics, 35100 Bornova, Izmir, Turkey}
\author{\.{I}smet Karaca}
\address{Ege University, Science Faculty, C Block, Department of Mathematics, 35100 Bornova, Izmir, Turkey}
%\thanks{The first author is supported by a fellowship by the Scientific and Technological Research Council of Turkey TUBITAK-2211/A}

%\date{\today}
\maketitle 

\begin{abstract}
	We introduce the digital projective product spaces based on Davis' projective product spaces. We determine an upper bound for the digital LS-category of the digital projective product spaces. In addition, we obtain an upper bound for the digital topological complexity of these spaces. We prove the relation between the digital topological complexity of the digital projective product spaces and sum of the digital topological complexity of the digital projective space by associating with the first digital sphere and the digital topological complexity of the remaining digital spheres through an explicit motion planning construction, which shows digital perspective validity of the results given by S. F\.{i}\c{s}ekc\.{i} and L. Vandembroucq. We apply our outcomes on specific spaces in order to be more clear. 

\end{abstract}

\medskip

\noindent \textit{Keywords:} LS-category, topological complexity, motion planning, digital projective product spaces, digital topology.

\noindent \textit{MSC 2020:} 55M30, 65D18, 68U10.

\section{Introduction}
Topological robotics has emerged as a new mathematical discipline, having being inspired by robotics and engineering. The discipline is devoted to study by means of diverse algebraic material and methods for the concept of configuration spaces, motion planning, and the topological complexity. A configuration space is given mechanical location which describes the configurations as desired. The motion planning algorithm determines the rule of a continuous motion in the system of the given initial and final positions. The motion planning algorithm should have instability, which arises from topological reasons. The notion of the topological complexity has been introduced by M. Farber in 2003 ~\cite{Far} in order to inform topological measures of the complexity of the motion planning problem in robotics. In other words, there exists discontinuity in the motion planner on the configuration space $X$. The tool that measures this amount is the topological complexity, $TC(X)$, of the space $X$. This is a numerical homotopy invariant that can be difficult to determine. Particularly, computing the topological complexity of the n-dimensional real projective space is shown to be linked to the difficult classical well-known problem of determining the Euclidian space of minimal dimension in which this projective space can be immersed ~\cite{FTY}. 

In recent years, digital topology has played an active role in the field of topological robotics. Karaca and Is ~\cite{KI} define the concept of digital topological complexity in 2018. The notion of the digital higher topological complexity is added to the literature in ~\cite{IK}. As shown in ~\cite{IK} the cohomological lower bound, particularly zero-divisor cup-length property is not valid for the digital topological complexity. The work on digital topology in finite digital image and given counter examples underline the differences between digital topological complexity and Farber's topological complexity~\cite{IK1, IK2}. The study by Is and Karaca displays that there exists another way to state the digital topological complexity by using digital functions~\cite{IK3}.

Since the topological complexity and its related invariants are homotopy invariants, the definition and properties of digital homotopy have gained importance and some features of digital homotopy have been generalized in ~\cite{LOS}. For the Lusternik-Schnirelmann category, one of the most important related invariants of the topological complexity, the digital LS category is defined in ~\cite{BV} and the study is expanded by applying it to digital functions ~\cite{VB}. Moreover, Ege and Karaca define cohomological operation precisely cup-length in digital setting and prove that deficiency of Künneth formula in this perspective ~\cite{EK2}. According to this, the cohomological lower bound cup-length property is not to be used for the digital LS-category. We refer to studies in ~\cite{Boxer2, EK1, EK2} for more knowledge about digital topological infrastructure.

Projective product space has been introduced by Davis~\cite{Davis} in 2010. This space can be considered as a generalization of real projective space but is not in general product of projective spaces. The topological complexity and some bounds of these spaces have been initiated in ~\cite{GGTX}. The improvement of this study to finalize the estimating problem about the topological complexity and the Lusternik-Schnirelmann category of projective product spaces has been included in ~\cite{FV}. Fi\c{s}ekci and Vandembroucq compute the Lusternik-Schnirelmann category of PPS and determine an exact value of the topological complexity for some cases. This duration leads us to build the digital structure of projective product spaces and deal with the digital topological complexity and the digital LS-category of these spaces with the allowance of direct approach thanks to digital nature which contains a discrete or combinatorial sense. 

This paper relates to topological robotics, more precisely the topological complexity and most closely related invariant LS-category, is organized by starting with primary notions and basic facts in the digital frame that included the use of the analysis of the geometrical and algebraic fundamentals with digital topology. We introduce the digital projective product spaces based on Davis' projective product space by applying the digital topological tools and by using digital spheres in ~\cite{Evako}. In the process, we present digital projective spaces. Moreover, we define the digital non-singular map, and we calculate the digital topological complexity of the digital projective spaces with the digital non-singular map characterization inspired by~\cite{FTY}. We obtain new results on digital topological complexity and digital LS-category of digital projective product spaces estimating the digital topological complexity invariants through making an explicit motion planning on digital spheres. In this way, for the first time in literature, we procure that digital topological complexity and digital LS-category invariants of special spaces. We determine an upper bound of digital LS-category and consequently this yields an upper bound of digital topological complexity for these spaces. Additionally, we prove the existence of the relation between the digital topological complexity of the digital PPS and sum of the digital topological complexity of the digital projective space by considering the first digital sphere and the digital topological complexity of the remaining digital spheres. We give examples for our main results to exhibit the application on specific spaces.

\section{Preliminaries}
In this section, we give significant definitions, essential facts, useful notations for the digital topology and topological robotics.

Given any finite subset $X$ of $\mathbb{Z}^{n}$ which consists of integer points of n-dimensional Euclidean space $\mathbb{R}^{n}$. Then $(X,\kappa)$ is called a digital image ~\cite{Boxer}, where $\kappa$ is an adjacency relation on elements of $X$. $x$ and $y$ distinct points in $\mathbb{Z}^{n}$ are \textit{digital $c_{k}-$adjacent} ~\cite{Boxer} with the properties that exist at most $k$ indices $i$ such that $|x_{i} - y_{i}| = 1$ and for all remain indices $i$ such that $|x_{i} - y_{i}| \neq 1$, $x_{i} = y_{i}$, where $k$ is less than or equal to $n$. This structure provides us $c_{1} = 2-$adjacency in $\mathbb{Z}$, $c_{1} = 4-$ and $c_{2} = 8-$adjacencies in $\mathbb{Z}^{2}$, and $c_{1} = 6-$, $c_{2} = 18-$ and $c_{3} = 26-$adjacencies in $\mathbb{Z}^{3}$. Let $(X_1, \kappa_{1})$ and $(X_2, \kappa_{2})$ be any two digital image such that the points $(x, y),(x_1, y_1)$ belong to $X_1 \times X_2$. Then $(x_2, y_2)$ and $(x_1, y_1)$ are adjacent in cartesian product digital images $X_1 \times X_2$ ~\cite{Berge, Han} if one of the following features hold:
\begin{itemize}
\item $x_2 = x_1$ and $y$ and $y_1$ are $\kappa_2$ adjacent; or
\end{itemize}
\begin{itemize}
\item $x$ and $x_1$ are $\kappa_1$ adjacent and $y=y_1$; or
\end{itemize}
\begin{itemize}
\item $x$ and $x_1$ are $\kappa_1$ adjacent and $y=y_1$ are $\kappa_2$ adjacent.
\end{itemize}

Let $(X, \kappa)$ be any digital image in $\mathbb{Z}^{n}$. $X$ is called \textit{digital $\kappa$-connected} with necessary and sufficient condition that for every pair of points $x,y \in X$ with $x \neq y$, there exists $\{x_{0},x_{1}, ...,x_{l}\} \subset X$ such that $x=x_{0}$, $y=x_{l}$, $x_{i}$ and $x_{i+1}$ are $\kappa$-adjacent, where $i=0,1,...,l-1$~\cite{Herman}. Given subsets $X_1 \subset \mathbb{Z}^{n_1}$ and $X_{2} \subset \mathbb{Z}^{n_2}$, a digital map $f : (X_1, \kappa_{1}) \to (X_2, \kappa_2)$ is \textit{digital $(\kappa_{1},\kappa_{2})-$continuous} if for any digital $\kappa_{1}-$connected subset $U_{1}$ of $X_{1}$, $f(U_{1})$ is digital $\kappa_{2}-$connected~\cite{Boxer}. Furthermore, $f$ is called a \textit{digital $(\kappa_{1},\kappa_{2})$-isomorphism} if $f$ is digital $(\kappa_{1},\kappa_{2})$-continuous, bijective, and the inverse $f^{-1}$ is digital $(\kappa_{2},\kappa_{1})-$continuous~\cite{Boxer3, Han}.

A \textit{digital interval} is defined as a set $[a,b]_{\mathbb{Z}} = \{z \in \mathbb{Z} : a \leq z \leq b\}$ from $a$ to $b$ points~\cite{Boxer4}. Since $[a,b]_{\mathbb{Z}} \subset \mathbb{Z}$, it has $2$-adjacency. The notation in ~\cite{LOS} $I_m$ represents the digital interval such that $I_m \subset \mathbb{Z}$ includes integers from $0$ to $m$ in $\mathbb{Z}$, and integers are consecutively adjacent. A digital path $f$ in $X$ from $x$ to $y$ is defined by a digital map $f: I_m\to X$ is digital $(2,\kappa)$-continuous with $f(0)=x$ and $f(m)=y$~\cite{Boxer4}. The digital path $f$ is called a digital $\kappa$-loop when $f(0) = f(m)$~\cite{Boxer4}. Let $f: I_m \to X$ and  $g: I_n \to X$ be digital $\kappa$-paths with $f(m) = g(0)$. The product of these two digital paths is defined in~\cite{Kha} as the map $(f \ast g): I_{m +n} \to X$ by

\begin{displaymath}
(f \ast g)(t) =
\begin{cases}
f(t), &0 \leq t \leq m \\ 
g(t-m), &m \leq t \leq m + n.
\end{cases}
\end{displaymath}

Let $(X, \kappa)$ and $(Y, \kappa_2)$ be any two digital images. Digital $(\kappa_{1},\kappa_{2})-$continuous maps $f,g : X \to Y$ are called \textit{digitally $(\kappa_{1},\kappa_{2})-$homotopic} in $Y$ ~\cite{Boxer, Kha} if there exists $m \in \mathbb{Z}^+$ and a digital map $H : X \times I_m \to Y$ such that satisfies the following features:
\begin{itemize}
	\item for all $x \in X$, $H(x,0)=f(x)$ and $H(x,m) = g(x)$;
\end{itemize}	

\begin{itemize}	
	\item for all $x \in X$, $H_{x} : I_m \to Y$, defined by $H_{x}(t) = H(x,t)$, is digital \\ $(2,\kappa_{2})-$continuous, for all $t \in I_m$;
\end{itemize}	
	
\begin{itemize}	
	\item for all $t \in I_m$, $H_{t} : X \to Y$, defined by $H_{t}(x) = H(x,t)$, is digital \\ $(\kappa_{1},\kappa_{2})-$continuous, for all $x \in X$.
\end{itemize} 

A digital continuous map $f: X \to Y$ is \textit{digitally nullhomotopic} in $Y$ with the case that $f$ is digitally nullhomotopic to a constant map in $Y$~\cite{Boxer, Kha}. A $(\kappa_{1},\kappa_{2})-$continuous map $f: X \to Y$ is \textit{digital $(\kappa_{1},\kappa_{2})-$homotopy equivalent} to a digital $(\kappa_{2},\kappa_{1})-$continuous map $g: Y \to X$ such that $g \circ f$ is digital $(\kappa_{1},\kappa_{1})-$homotopic to the digital identity map on $X$ and $f \circ g$ is digital $(\kappa_{2},\kappa_{2})-$homotopic to the digital identity map on $Y$ ~\cite{Boxer5, Han1}. A digital image $X$ is called \textit{digital $\kappa-$ contractible} if the identity map in $X$ is digital $(\kappa, \kappa)-$ homotopic to a constant map in $X$~\cite{Boxer, Kha}.

Let $X^{I_m}$ represents the set of all digital continuous paths $\alpha : I_m \to X$ in $X$. $\pi : X^{I_m} \rightarrow X \times X$ is a digital continuous map that assigns any digital continuous paths $\alpha$ in $X$ to the pair of its initial and terminal points $(\alpha(0),\alpha(m))$~\cite{KI}.

\begin{defn}(\cite{KI})
The digital topological complexity number $\digtc_{\kappa}(X)$ is the least integer $l$ such that $U_{0}, U_{1}, ..., U_{l}$ is a cover of $X \times X$ and for any $1 \leq i \leq l$, admits digital continuous map $s_{i} : U_{i} \to X^{I_m}$ such that $\pi \circ s_{i} = id_{U_{i}}$.
\end{defn}

The digital continuity of $s_{i}$ is needed for the definition of the digital topological complexity. In order to ensure an adjacency relation between two digital paths is given: Let $\alpha_{1} : I_{m_1} \to X$ and $\alpha_{2} : I_{m_2} \to X$ be any two digital continuous paths in $X$. Then $\alpha_{1}$ and $\alpha_{2}$ are digital $\lambda-$connected on $X^{I_{m_1 + m_2}}$, if for all $t$ times, $\alpha_{1}(t)$ and $\alpha_{2}(t)$ are digital $\lambda-$connected. Here, $t$ can be considered different times in $\alpha_{1}$ and $\alpha_{2}$.

\begin{theo}(\cite{KI}) Digital topological complexity depends on only digital homotopy.
\end{theo}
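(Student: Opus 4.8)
The plan is to establish that $\digtc_\kappa$ is a digital homotopy invariant, which is exactly the assertion that it depends only on the digital homotopy type. So I would suppose $X$ and $Y$ are digitally homotopy equivalent, witnessed by digital continuous maps $f : X \to Y$ and $g : Y \to X$ together with digital homotopies $H : g \circ f \simeq \mathrm{id}_X$ in $X$ and $K : f \circ g \simeq \mathrm{id}_Y$ in $Y$, and prove the inequality $\digtc_\kappa(Y) \le \digtc_\kappa(X)$. The reverse inequality follows by interchanging the roles of $X$ and $Y$ (and of $f$ and $g$), so equality is immediate once one direction is in hand.

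First I would fix $\digtc_\kappa(X) = l$, with a cover $U_0, U_1, \ldots, U_l$ of $X \times X$ and digital continuous sections $s_i : U_i \to X^{I_m}$ satisfying $\pi \circ s_i = \mathrm{id}_{U_i}$ for $1 \le i \le l$. Because $g \times g : Y \times Y \to X \times X$ is digital continuous as a product of digital continuous maps, the preimages $V_i := (g \times g)^{-1}(U_i)$ again form a cover of $Y \times Y$ by $l+1$ sets. Over each $V_i$, $1 \le i \le l$, I would transport the path $s_i\bigl(g(y_0), g(y_1)\bigr)$ from $X$ to $Y$ by post-composing with $f$, and then repair its endpoints using the slices of $K$: writing $K_y$ for the digital path $t \mapsto K(y,t)$ from $f(g(y))$ to $y$ and $\overline{K_y}$ for its reverse, I would set
\[
\sigma_i(y_0, y_1) \;=\; \overline{K_{y_0}} \;\ast\; \bigl(f \circ s_i(g(y_0), g(y_1))\bigr) \;\ast\; K_{y_1}.
\]
This is a digital path in $Y$ that starts at $y_0$, runs through $f(g(y_0))$ and $f(g(y_1))$, and ends at $y_1$, so $\pi \circ \sigma_i = \mathrm{id}_{V_i}$ and the $\sigma_i$ would give $\digtc_\kappa(Y) \le l$.

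The step I expect to be the main obstacle is verifying that each $\sigma_i$ is itself digital continuous, that is, that $\lambda$-connected pairs $(y_0,y_1)$ are sent to $\lambda$-connected paths in the appropriate path space $Y^{I_{m'}}$. This breaks into three claims that must be assembled with care: that post-composition with the digital continuous map $f$ carries the $\lambda$-connected family $\{s_i(g(y_0),g(y_1))\}$ (inherited from digital continuity of $s_i$ and $g$) to a $\lambda$-connected family in $Y$; that the assignments $y \mapsto K_y$ and $y \mapsto \overline{K_y}$ vary digital continuously in $y$, which uses that $K$ is a digital homotopy and hence $\lambda$-connected in its space variable; and that the product $\ast$ preserves $\lambda$-connectedness of its factors. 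One must also reconcile domain lengths, padding each concatenation to a common interval $I_{m'}$ (here $m' = 2m_K + m$, with $m_K$ the length of $K$) by the standard digital reparametrization so that every $\sigma_i$ lands in a single path space. Once this continuity is secured, the symmetric construction yields the reverse inequality and therefore $\digtc_\kappa(X) = \digtc_\kappa(Y)$.
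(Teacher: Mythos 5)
Your argument is correct and is essentially the standard one: the paper itself states this theorem only as a citation of \cite{KI}, and the proof there (adapted from Farber's homotopy-invariance argument for $\TC$) proceeds exactly as you do, pulling back the cover along $g\times g$, transporting sections by $f$, and repairing endpoints with the slices of the homotopy $f\circ g\simeq \mathrm{id}_Y$. You also correctly isolate the genuinely digital issues (padding concatenations to a common interval $I_{m'}$ and checking $\lambda$-connectedness of the resulting family of paths), so nothing essential is missing.
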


\begin{theo}(\cite{KI}) If $(X , \kappa)$ is a digital path-connected space, then
\begin{displaymath}
\dcat_{\kappa} (X) \leq \digtc_{\kappa}(X) \leq \dcat_{\lambda} (X \times X),
\end{displaymath}
where $\lambda$ is an adjacency relation for the cartesian product space $X \times X$.
\end{theo}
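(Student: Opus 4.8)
The plan is to mirror the classical Farber argument that $\operatorname{cat}(X) \le TC(X) \le \operatorname{cat}(X\times X)$, reading $\digtc_{\kappa}(X)$ as the digital sectional category of the endpoint map $\pi : X^{I_m} \to X \times X$ recalled above and translating each homotopy-theoretic step into the digital framework, where homotopies are indexed by digital intervals $I_m$ and paths are compared through the $\lambda$-connectedness relation between digital paths introduced before the definition of $\digtc$.

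For the lower bound $\dcat_{\kappa}(X) \le \digtc_{\kappa}(X)$, I would fix a basepoint $x_0 \in X$ and take a digital motion planning cover $U_0, \ldots, U_l$ realizing $l = \digtc_{\kappa}(X)$, with digital sections $s_i : U_i \to X^{I_m}$ satisfying $\pi \circ s_i = \mathrm{id}_{U_i}$. First I would restrict attention to the slice $X \times \{x_0\}$, which is digitally isomorphic to $X$, and set $V_i = U_i \cap (X \times \{x_0\})$; these sets cover $X$. For $(x, x_0) \in V_i$ the assignment $t \mapsto s_i(x,x_0)(t)$ is a digital path from $x$ to $x_0$, so letting $x$ vary yields a digital map $H_i : V_i \times I_m \to X$ with $H_i(x,0)=x$ and $H_i(x,m)=x_0$. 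This is exactly a digital contraction of $V_i$ inside $X$, so each $V_i$ is digitally categorical and therefore $\dcat_{\kappa}(X) \le l$. The only point needing care is that digital continuity of $s_i$ passes to digital continuity of $H_i$, which follows from the adjacency structure on $X^{I_m}$ together with the $\lambda$-connectedness relation.

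For the upper bound $\digtc_{\kappa}(X) \le \dcat_{\lambda}(X \times X)$, let $W_0, \ldots, W_l$ be a categorical cover realizing $l = \dcat_{\lambda}(X\times X)$, so each inclusion $j_i : W_i \hookrightarrow X \times X$ is digitally $\lambda$-nullhomotopic to a constant with value $(a_i, b_i)$. Composing with the two digital projections $p_1, p_2 : X \times X \to X$ shows that $p_1 \circ j_i$ is digitally homotopic to the constant $a_i$ and $p_2 \circ j_i$ to the constant $b_i$; since $X$ is digital path-connected, these two constants are themselves digitally homotopic, and concatenating the three homotopies (using the digital path product $\ast$ to manage the interval lengths) produces a single digital homotopy $G_i : W_i \times I_N \to X$ from $p_1|_{W_i}$ to $p_2|_{W_i}$. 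For each $w = (x_1, x_2) \in W_i$ the slice $t \mapsto G_i(w,t)$ is then a digital path from $x_1$ to $x_2$, so $s_i(w) := G_i(w,-)$ defines a digital section of $\pi$ over $W_i$, whence $\digtc_{\kappa}(X) \le l$.

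The main obstacle I anticipate is bookkeeping the lengths of the indexing intervals. Concatenating the homotopy from $p_1$ to the constant $a_i$, the homotopy of constants connecting $a_i$ to $b_i$ supplied by path-connectivity, and the homotopy from the constant $b_i$ to $p_2$ produces a homotopy over an interval $I_N$ whose length $N$ generally exceeds the $m$ fixed in the definition of $\digtc_{\kappa}(X)$; one must either argue that $\digtc_{\kappa}$ is insensitive to the length of the indexing interval or fix $m$ large enough from the outset. Equally delicate is verifying that $w \mapsto G_i(w,-)$ is digitally continuous into $X^{I_N}$, that is, that $\lambda$-adjacent configurations $w, w'$ yield $\lambda$-connected paths; this is the step where the product adjacency on $X \times X$ and the $\lambda$-connectedness relation between digital paths must be invoked precisely, and it is the place most likely to require the hypotheses to be spelled out in full.
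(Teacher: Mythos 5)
The paper does not actually prove this statement; it is quoted from Karaca and Is \cite{KI}, and the proof there is exactly the Farber-style argument you outline (restrict a motion-planning cover to the slice $X\times\{x_0\}\cong X$ for the lower bound, and convert a digital nullhomotopy of $W_i\hookrightarrow X\times X$ into a fibrewise digital path from $p_1|_{W_i}$ to $p_2|_{W_i}$ for the upper bound). Your proposal is correct and takes essentially that same route; the interval-length bookkeeping you flag is the standard digital-topology technicality and is resolved by trivial (constant) extensions of paths and homotopies.
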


This inequality implies that $\digtc_{\kappa}(X) \leq 2 (\dcat_{\kappa} (X))$.

\begin{defn}(\cite{BV})
Digital LS-category $\dcat_{\kappa}(X)$  of a space $(X, \kappa)$ is the least integer $l$ such that there exists a cover of $X$ by $l+1$ subsets $U_0 , U_1 , \ldots, U_l \subset X$ where each inclusion map $i_i: U_i \hookrightarrow X$ for $i = 0, \ldots , l$ is digital $\kappa$-nullhomotopic. 
\end{defn}

\begin{theo}(\cite{BV}) Digital LS-category depends on only digital homotopy as well.
\end{theo}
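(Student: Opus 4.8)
The plan is to establish that the digital LS-category is invariant under digital homotopy equivalence: if $f\colon X\to Y$ and $g\colon Y\to X$ form a digital homotopy equivalence (so that $g\circ f$ is digitally homotopic to the identity on $X$ and $f\circ g$ to the identity on $Y$, as in the definition recalled above), then $\dcat_{\kappa}(X)=\dcat_{\kappa'}(Y)$. Because the set-up is symmetric in the pairs $(X,f)$ and $(Y,g)$, it suffices to prove a single inequality, say $\dcat_{\kappa'}(Y)\le\dcat_{\kappa}(X)$, and then interchange the roles of the two spaces to obtain the reverse inequality and hence equality.

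To prove that inequality I would suppose $\dcat_{\kappa}(X)=l$ and fix a categorical cover $U_0,\dots,U_l$ of $X$, meaning that each inclusion $\iota_i\colon U_i\hookrightarrow X$ is digitally $\kappa$-nullhomotopic. First I would set $V_i=g^{-1}(U_i)\subset Y$ and equip each $V_i$ with the induced adjacency, so that it is again a digital image; since the $U_i$ cover $X$ and $g$ is a function, the sets $V_i$ cover $Y$. The heart of the argument is to show that each inclusion $j_i\colon V_i\hookrightarrow Y$ is digitally nullhomotopic. For this I would use the chain of digital homotopies
\begin{displaymath}
j_i \;\simeq\; (f\circ g)\circ j_i \;=\; f\circ(g\circ j_i),
\end{displaymath}
where the first relation is obtained by restricting the homotopy $f\circ g\simeq \mathrm{id}_Y$ to $V_i\times I_m$. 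By the very definition of $V_i$ we have $g(V_i)\subseteq U_i$, so the map $g\circ j_i\colon V_i\to X$ factors as $\iota_i\circ(g|_{V_i})$ through $U_i$; since $\iota_i$ is nullhomotopic, so is $g\circ j_i$, and post-composing with $f$ preserves nullhomotopy. Hence $j_i$ is digitally nullhomotopic, the family $V_0,\dots,V_l$ is a categorical cover of $Y$ of size $l+1$, and $\dcat_{\kappa'}(Y)\le l$.

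The technical ingredients I expect to invoke as lemmas are that digital homotopy is compatible with composition (if $\phi\simeq\phi'$ then $h\circ\phi\simeq h\circ\phi'$ and $\phi\circ k\simeq\phi'\circ k$ for digitally continuous $h,k$), that the restriction of a digitally continuous map to a subset with the induced adjacency is again digitally continuous, and that a digital nullhomotopy restricts to a subset. The main obstacle is the bookkeeping of the homotopy parameter: a digital homotopy $H\colon Z\times I_m\to W$ carries its own length $m\in\mathbb{Z}^+$, so when stringing together the homotopy $f\circ g\simeq\mathrm{id}_Y$ with the nullhomotopy of $\iota_i$ one must reconcile the differing interval lengths, typically by reparametrizing — padding the shorter homotopy with a constant tail so that the two lengths agree before concatenating. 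Once this reparametrization step is in place, the factorization $g\circ j_i=\iota_i\circ(g|_{V_i})$ together with the invariance of nullhomotopy under pre- and post-composition yields the conclusion cleanly, and the symmetric argument with $f$ and $g$ exchanged closes the equality $\dcat_{\kappa}(X)=\dcat_{\kappa'}(Y)$.
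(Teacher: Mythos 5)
The paper does not prove this statement; it is quoted from \cite{BV} as background, so there is no in-paper argument to compare against. Your proposal is the standard homotopy-invariance argument for LS-category transplanted to the digital setting (pull back a categorical cover along $g$, use $j_i\simeq f\circ g\circ j_i$ and the factorization $g\circ j_i=\iota_i\circ(g|_{V_i})$), and it is correct: the only digital-specific points --- that restrictions and compositions of digitally continuous maps are digitally continuous, that inclusions of subsets with induced adjacency are continuous, and that homotopies of different interval lengths can be concatenated after padding --- are exactly the ones you flag, and all are standard facts from Boxer's work. This is essentially the proof one finds in \cite{BV}, so nothing further is needed.
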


Throughout this work, we consistently assume that a subset of $\mathbb{Z}^n$ has the possible maximal adjacency to preserve the adjacency relation on the product of spaces. In short, we use the notation $\dcat(X) = \dcat_{\kappa}(X)$ and $\digtc(X) = \digtc_{\kappa}(X)$ and express all the digital terms without indexing adjacency. 

\section{Main Results}
The projective product space has been introduced by Davis in~\cite{Davis} as the quotient space $P_{\bar{n}} = S_{\bar{n}} / (\bar{x} \sim -\bar{x}) = (S^{n_1} \times \cdots \times S^{n_r}) /  ((x_1 , \ldots , x_r) \sim (-x_1 , \ldots , -x_r))$ concerning the diagonal action of $\mathbb{Z}_2$ on $S_{\bar{n}}$ where positive integers $n_1 \leq \ldots \leq n_r$. In the case of r = 1, as known, the space $P_{\bar{n}}$ equals to  usual real projective space $P^{n_1}$.

In order to define the digital projective product space, we use the concept of digital spheres. A digital $0$-dimensional space is a disconnected digital space $S^0 (x,y)$ with just two points $x$ and $y$ and the join $S_{\text{min}} ^n = S_0 ^0 \oplus S_1 ^0 \oplus \ldots \oplus S_n ^0$ of $n$-copies of zero dimensional surfaces $S^0$ is called a minimal digital $n$-sphere in~\cite{Evako}. As a result of this definition, we set the following notations:

\begin{itemize}
\item $S_{\text{min}} ^n = \left\{  x_i = (x_{i_0} , \ldots , x_{i_n}) \in \mathbb{Z}^{n+1} \right\}$ where $|x_{i_j}| = 1$ if $i = j$ and $0$ otherwise,
\end{itemize}

\begin{itemize}
\item $S_k ^0 = \{ x = (0, \ldots , 0 , x_k , 0 , \ldots , 0) \subset \mathbb{Z}^{n+1} \text{ : } \Vert x \Vert = 1 \}$ for $k \leq n$.
\end{itemize}
Notice that a $n$-dimensional digital sphere in $\mathbb{Z}^{n+1}$ has $2n + 2$ vertices.

\begin{ex} Digital spheres $S_{\text{min}} ^1$ and $S_{\text{min}} ^2$ that are modified from figures in ~\cite{Evako, LOS} are illustrated: 
\begin{flalign*}
S_{\text{min}} ^1 &= \{(1,0), (-1,0), (0,1), (0,-1)\} \\
S_{\text{min}} ^2 &= \{(1,0,0), (-1,0,0), (0,1,0), (0,-1,0), (0,0,1), (0,0,-1)\}
\end{flalign*}

\begin{figure}[ht]
\begin{minipage}[b]{0.27\linewidth}
\centering
\includegraphics[width=\textwidth]{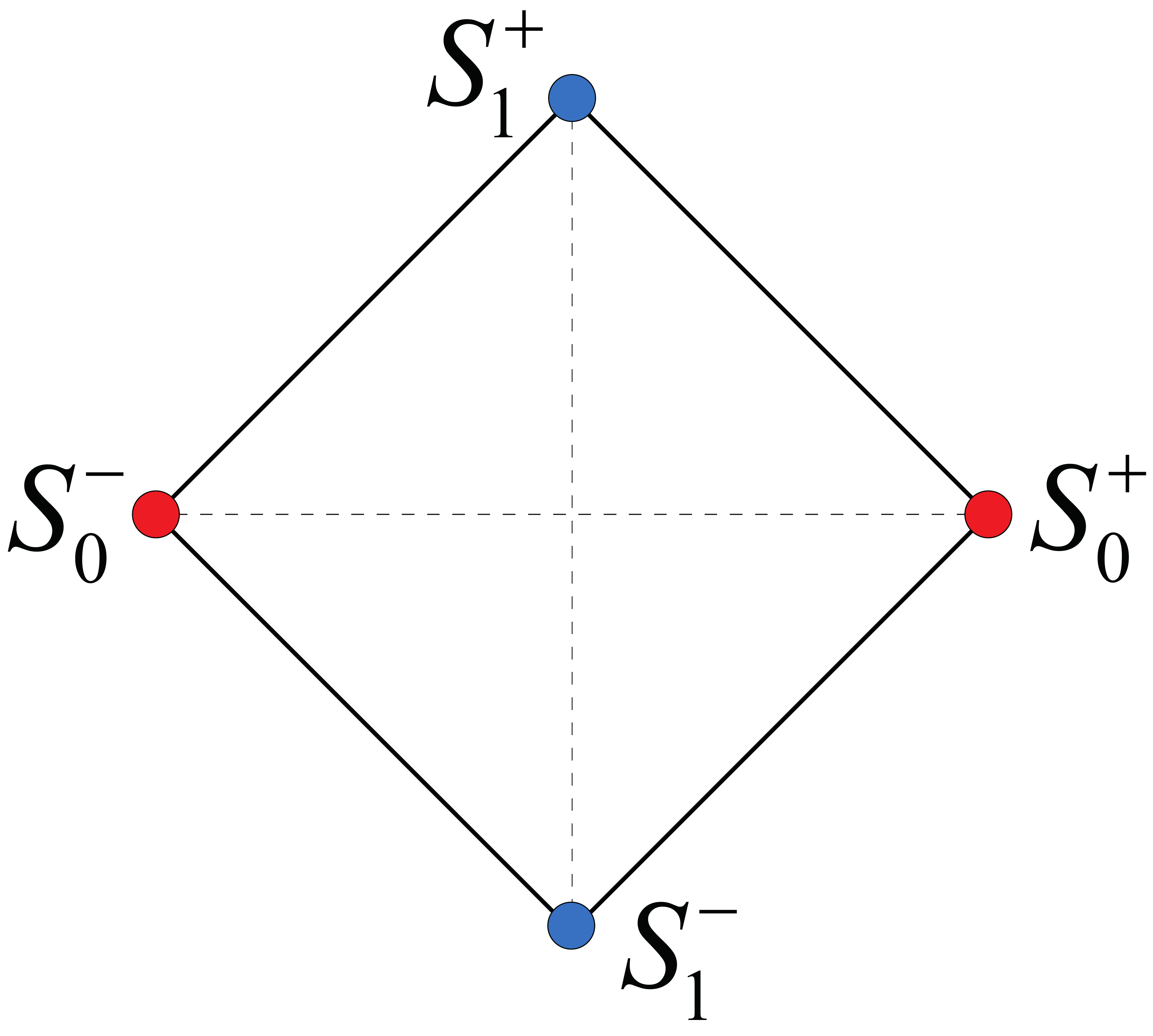}
\caption{$S_{\text{min}} ^1$}
\label{fig:figure1}
\end{minipage}
\hspace{3cm}
\begin{minipage}[b]{0.27\linewidth}
\centering
\includegraphics[width=\textwidth]{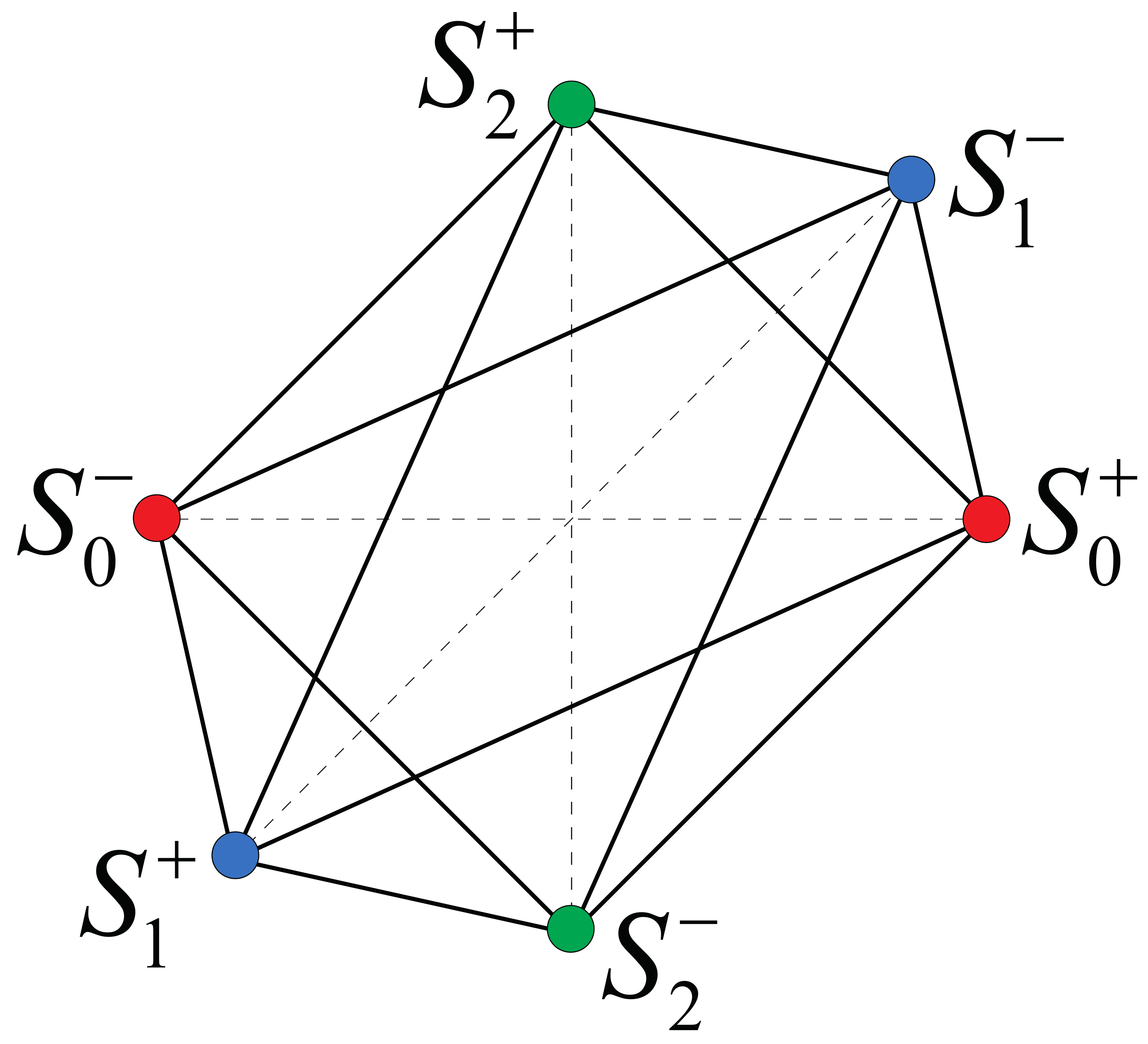}
\caption{$S_{\text{min}} ^2$}
\label{fig:figure2}
\end{minipage}
\end{figure}
\end{ex}

We present the quotient space,
\begin{displaymath}
\mathbb{Z}P^n := \dfrac{S_{\text{min}} ^n}{x \sim -x}
\end{displaymath}
with respect to the diagonal action of $\mathbb{Z}_2$ on $S_{\text{min}} ^n$ as the digital projective space. We denote it by $\text{d-}P^n$.

Projective product space in digital topology sense, we introduce the diagonal action of $\mathbb{Z}_2$ on $\digs$ as the digital projective product space where $\digs := S_{\text{min}} ^{n_1} \times \cdots \times S_{\text{min}} ^{n_r}$ called the product of digital spheres and $\bar{x} = (x_1 , \ldots , x_r) \in \digs$. We signify digital projective product space by
\begin{displaymath}
\digp = \dfrac{\digs} {\bar{x} \sim -\bar{x}} = \dfrac{S_{\text{min}} ^{n_1} \times \cdots \times S_{\text{min}} ^{n_r} } { (x_1 , \ldots , x_r) \sim (-x_1 , \ldots , -x_r) }.
\end{displaymath}
The dimension $\dim( \digp) = \dim (\digs) = \sum n_i$ and if r = 1, then the space $\digp$ coincides with the digital projective space $\text{d-}P^{n_1}$ which we generalized.

\begin{theo} Let $\digp := (S_{\text{min}} ^{n_1} \times \cdots \times S_{\text{min}} ^{n_r}) / (\bar{x} \sim -\bar{x})$ be digital projective product space where $\bar{n} = (n_1 , \ldots , n_r)$ and $n_1 \leq \ldots \leq n_r$. Then the digital Lusternik-Schnirelman category of $\digp$ satisfies that $\dcat (\digp) \leq n_1 + r - 1$.
\end{theo}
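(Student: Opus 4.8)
The plan is to organise $\digp$ over its first factor. I would first check that the assignment $q\bigl([(x_1,\ldots,x_r)]\bigr)=[x_1]$ defines a digital map $q\colon\digp\to\dign$: it is well defined because the antipodal identification reverses every coordinate at once, and it is digital continuous since adjacent classes in $\digp$ have representatives that are coordinatewise equal-or-adjacent, so their first coordinates are equal-or-adjacent in $\dign$. For each of the $n_1+1$ vertices $[e_k^{(1)}]$ of $\dign$ I would consider the block $B_k:=q^{-1}\bigl([e_k^{(1)}]\bigr)$. Normalising the global sign so that the first coordinate is exactly $+e_k^{(1)}$ singles out a unique representative of every class in $B_k$, and recording the remaining coordinates gives a digital isomorphism $B_k\cong F$, where $F:=S_{\text{min}}^{n_2}\times\cdots\times S_{\text{min}}^{n_r}$. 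Since the nonzero index of the first coordinate is a sign-invariant, the blocks $B_0,\ldots,B_{n_1}$ partition $\digp$ into $n_1+1$ copies of the fibre $F$.

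Next I would estimate the fibre. Each minimal digital sphere $S_{\text{min}}^{n_i}$ decomposes into two digitally contractible hemispheres, so $\dcat\bigl(S_{\text{min}}^{n_i}\bigr)\le 1$; the product inequality for the digital LS-category then gives $\dcat(F)\le r-1$, i.e.\ a cover $F=C_0\cup\cdots\cup C_{r-1}$ by $r$ sets with digitally nullhomotopic inclusions. Transporting this cover through the isomorphisms $B_k\cong F$ yields sets $C_\ell^{(k)}\subset B_k$ whose inclusions into $B_k$ are nullhomotopic; because any digital homotopy taking values in the subimage $B_k$ also takes values in $\digp$, each inclusion $C_\ell^{(k)}\hookrightarrow\digp$ is digitally nullhomotopic as well.

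The categorical cover of $\digp$ is then assembled by a digital staircase. For $s=0,1,\ldots,n_1+r-1$ I would put $G_s:=\bigcup_{k+\ell=s}C_\ell^{(k)}$; this produces exactly $n_1+r$ sets, which is precisely the count needed for the asserted bound $\dcat(\digp)\le n_1+r-1$. On a fixed diagonal the indices $k$ are distinct, so the constituent pieces lie in distinct blocks $B_k$ and are pairwise disjoint as subsets of $\digp$, and each has already been contracted inside $\digp$ in the previous step.

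The hard part will be that, in contrast to the topological product argument, the blocks are not digitally separated: the vertices $[e_k^{(1)}]$ of $\dign$ are pairwise adjacent, so two pieces $C_\ell^{(k)},C_{\ell'}^{(k')}$ on the same diagonal can be joined by an edge of $\digp$ (for instance when their fibre parts coincide), and hence $G_s$ need not be digitally disconnected. Contracting each piece independently is therefore not automatically a single valid digital homotopy on $G_s$, and this is the one genuinely non-formal point. I would resolve it by synchronising the hemisphere data of the first sphere with the fibre homotopies: using the two hemispheres of $S_{\text{min}}^{n_1}$ to drive all first-coordinate motion of $G_s$ to a common vertex while the chosen fibre contractions run simultaneously, and then verifying the three continuity requirements in the definition of a digital homotopy for the resulting map $G_s\times I_m\to\digp$. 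Once this local-triviality-type compatibility is checked, the two numerical inputs combine to give $\dcat(\digp)\le n_1+r-1$.
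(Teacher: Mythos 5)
Your reduction of $\digp$ to $n_1+1$ blocks $B_k\cong F$ over the vertices of $\dign$ is fine as a set-theoretic partition, and you have correctly located the crux: the diagonal sets $G_s=\bigcup_{k+\ell=s}C_\ell^{(k)}$ are unions of pieces that are pairwise disjoint but not digitally separated. The problem is that your proposed resolution does not close this gap, and the gap is exactly where the content of the theorem lives. Since the paper works with the maximal adjacency, any two non-antipodal vertices of $S_{\text{min}}^{n_1}$ are adjacent, so $[(e_k,y)]\in C_\ell^{(k)}$ and $[(e_{k'},y')]\in C_{\ell'}^{(k')}$ with $k\neq k'$ are adjacent in $\digp$ whenever $y$ is equal or adjacent to $\pm y'$, which happens throughout $G_s$. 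A digital nullhomotopy of $G_s\hookrightarrow\digp$ must keep the images of every such pair equal or adjacent at every time $t$. Your plan runs the contraction of $C_\ell$ inside $B_k$ and the contraction of $C_{\ell'}$ inside $B_{k'}$ simultaneously; since $\ell\neq\ell'$ these are independent homotopies of different pieces of $F$, and nothing forces their tracks to stay adjacent --- one may push $y$ toward one pole of $S_{\text{min}}^{n_2}$ while the other pushes $y'$ toward the antipodal pole, which is not adjacent to it. Synchronising the first coordinates, as you propose, does not constrain the fibre coordinates and so cannot repair this. There is the further issue that the piecewise contractions end at distinct points $[(e_k,y_0^{\ell})]$, and joining these afterwards to a common basepoint meets the same cross-piece continuity obstruction. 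Note also that the additive product inequality $\dcat(F)\le r-1$ you invoke is classically proved by the very same staircase-plus-separation argument (disjoint open sets in a normal space), so in the digital setting it carries the same burden: disjoint does not imply non-adjacent.

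This is genuinely a different route from the paper's, but the difference is precisely that the paper never contracts disjoint pieces independently. It stratifies $\digs$ by the axis pairs $S_i^0$ of the first sphere for $0\le i\le n_1-1$ (paired with the equatorial subspheres of the other factors) and, for the remaining indices, by the number of coordinates sitting at the distinguished poles (the sets $Q_J$); on each stratum the homotopy is one uniform coordinatewise formula --- every coordinate moves along $\rho(u_q,A^{n_q}_{\pm})$ toward a pole whose sign is dictated by a single designated coordinate, followed by a fixed meridian $\sigma_0$ that reconciles the two antipodal representatives at the matching time $t=m_1$. Adjacency of tracks is then checked from one expression rather than glued from unrelated homotopies. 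To make your fibrewise organisation rigorous you would need an analogous globally coherent formula on each $G_s$, at which point you would essentially be rebuilding the paper's construction; as written, the proposal has a genuine gap at its decisive step.
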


\begin{proof} Let $\bar{n} = (n_1 , \ldots , n_r)$, where $n_1 \leq \ldots \leq n_r$.
\begin{center}
$|\bar{n}| := \dim(\digp) = \dim(\digs) = \sum{n_i}$ and $l(\bar{n}) = r$.
\end{center}
For $k \leq n$, we consider the sets $S_k ^0$ and $S_{\text{min}} ^n$ and fix the following general notation:
\begin{itemize}
\item $S_k ^+ = \{ x = (0, \ldots , 0 , x_k , 0 , \ldots , 0) \subset \mathbb{Z}^{n+1} \text{ : } x_k > 0 \}$,
\end{itemize}

\begin{itemize}
\item $S_k ^- = \{ x = (0, \ldots , 0 , x_k , 0 , \ldots , 0) \subset \mathbb{Z}^{n+1} \text{ : } x_k < 0 \}$,
\end{itemize}

\begin{itemize}
\item $S_k ^0 = S_k ^+ \cup S_k ^-$,
\end{itemize}

\begin{itemize}
\item $A_k ^+ = (0, \ldots , 0 , \underbrace{1}_{\text{$k$. entry}} , 0 , \ldots , 0)$ (North pole) and
\end{itemize}

\begin{itemize}
\item $A_k ^- = (0, \ldots , 0 , \underbrace{-1}_{\text{$k$. entry}} , 0 , \ldots , 0)$ (South pole),
\end{itemize}

Notice that
\begin{flalign*}
S_{\text{min}} ^n &= S_0 ^0 \oplus S_1 ^0 \oplus \ldots \oplus S_n ^0 = S^0 (S_0 ^+ , S_0 ^-) \oplus S_{\text{min}} ^{n-1} = S_0 ^+ \cup S_{\text{min}} ^{n-1} \cup S_0 ^- \\
&= S_n ^+ \cup S_{\text{min}} ^{n-1} \cup S_n ^- = S_n ^+ \cup S_{n-1} ^+ \cup \ldots \cup S_0 ^+ \cup S_0 ^- \cup \ldots \cup \cup S_{n-1} ^- \cup S_n ^-
\end{flalign*}
and define the map $\epsilon_k : S_k ^0 \to \pm 1$ such that
\begin{center}
$\epsilon_k (0 , \ldots , x_k , \ldots , 0) = 
\begin{cases}
\text{ }\text{ }1,  &\text{ if } x_k > 0 \\
-1, &\text{ if } x_k < 0.
\end{cases}$
\end{center}
Note that $\epsilon_k (-x) = -\epsilon_k (x)$ for any $x \in S_k ^0$.

Let $\rho(A,B) : I_{m_1} \to S_{\text{min}} ^n$ be the digital path from $A$ to $B$ for except antipodal points $A,B$ $(A \neq -B)$ of digital sphere $S_{\text{min}} ^n$. Say $\rho(-A,-B) = -\rho(A,B)$. Let also fixed meridian $\sigma_0 : I_{m_2} \to S_{\text{min}} ^k$ digital path from $A_k ^-$ (south pole) to $A_k ^+$ (north pole) such that $\sigma_0 (0) = A_k ^-$ and $\sigma_0 (m_2) = A_k ^+$.

We will define a cover $\bigcup_{i = 0} ^{n_1 + r - 1} U_i$ of $\digs := S_{\text{min}} ^{n_1} \times \cdots \times S_{\text{min}} ^{n_r}$ where $n_1 \leq \ldots \leq n_r$.

\begin{itemize}
	\item For $0 \leq i \leq n_1 - 1$, we set $U_i = S_i ^0 \times \prod_{q=2} ^r S_{\text{min}} ^{n_q - 1}$.
\end{itemize}

\begin{itemize}	
	\item For a subset $J \subset \{ 1, \ldots , r \}$, we denote $|J|$ the cardinality of $J$ and consider
\begin{center}
$Q_J = \{ \bar{u} \in \digs \text{ : } u_j \in S_{n_j} ^0 \text{ if } j \in J, \text{ }u_j \in S_{\text{min}} ^{n_j} \text{ if } j \notin J \}$
\end{center}
Realize that for $J \neq J'$ with indifferent cardinality $|J| = |J'|$, the sets $Q_J$ and $Q_{J'}$ are disjoint. By the way, we use the inspired notation by~\cite{CP}.

\noindent For $i = n_1 - 1 +k$, where $k = 1, \ldots , r-1$, we set $U_i = U_{n_1 - 1 +k} = \bigcup_{|J|=k} Q_J$.
\end{itemize}

\begin{itemize}
\item For $i = n_1 + r - 1$, we set $U_i = U_{n_1 + r - 1} = S_{n_1} ^0 \times \cdots \times S_{n_r} ^0 = S_{\bar{n}} ^0$. We have a cover of $\digs = U_0 \cup \ldots \cup U_{n_1 + r - 1}$ by subsets. We now define, $0 \leq i \leq n_1 + r - 1$, a digital homotopy function by
\begin{center}
$h_i :U_i \to (\digp)^{I_{m_1 + m_2}}$.
\end{center}
The class of an element $\bar{u} \in \digs$ in $\digp$ is denoted by $[\bar{u}]$. 
\end{itemize}

\underline{For $0 \leq i \leq n_1 - 1$:} For $\bar{u}=(u_1,\dots,u_r) \in S_i ^0 \times \prod_{q=2} ^r S_{\text{min}} ^{n_q - 1}$ and $t \in I_{m_1 + m_2}$, we set 
\begin{displaymath}
h_i(\bar{u},t) = \left[ \rho(u_1 , A^i _{\epsilon_i(u_1)})(t) , \rho(u_2 , A^{n_2} _{\epsilon_i(u_1)})(t) , \ldots , \rho(u_r , A^{n_r} _{\epsilon_i(u_1)})(t) \right].\\
\end{displaymath}

\underline{For $n_1 \leq i \leq n_1 + r - 2$:} Recall that we write $i=n_1-1+k$ with $k = 1, \ldots , r - 1$ and that $U_i = U_{n_1 - 1 + k} = \bigcup_{|J|=k} Q_J$. We give a digital homotopy function by
\begin{displaymath}
h_J: Q_J  \to (\digp)^{I_{m_1 + m_2}}. 
\end{displaymath}
Set $j_0 = \text{min }J$. If $\bar{u} \in Q_J$, then $u_{j_0} \in S_{n_{j_0}} ^0$.
\noindent
For $\bar{u} \in Q_J$ and $t \in I_{m_1 + m_2}$, $0 \leq t \leq m_1$, we set
\begin{displaymath}
h_J(\bar{u},t) = \left[ \rho(u_1 , B(u_1))(t) , \ldots , \rho(u_q , B(u_q))(t) , \ldots , \rho(u_r , B(u_r))(t) \right],
\end{displaymath}
where, for $1 \leq q \leq r$, $B(u_q) = \begin{cases} A^{n_q} _{\epsilon(u_q)}, & \text{ if } q \in J, \\ A^{n_q} _{\epsilon(u_{j_0})}, & \text{ if } q \notin J. \end{cases}$ \\

We distinguish two cases for $t \in I_{m_1 + m_2}$, $m_1 \leq t \leq m_1 + m_2$.\\
If $\epsilon(u_{j_0})  = 1$, then $h_J(\bar{u},t) = [\omega_1 , \ldots , \omega_r]$ where, for $1 \leq q \leq r$, 
\begin{displaymath}
\omega_q = \begin{cases} \sigma_0 (t - m_1), & \text{ if } \epsilon(u_{q}) = -1 \text{ and } q \in J \\ A^{n_q} _{\epsilon({u_{j_0}})}, & \text{otherwise}. \end{cases}
\end{displaymath}
If $\epsilon(u_{j_0})  = -1$, then
$h_J(\bar{u},t) = [\omega_1 , \ldots , \omega_r]$ where, for $1 \leq q \leq r$, 
\begin{displaymath}
\omega_q = \begin{cases} \sigma_0 (t - m_1), & \text{ if } \epsilon(u_{q}) = 1 \text{ and } q \in J \\ A^{n_q} _{-\epsilon({u_{j_0}})}, & \text{otherwise}. \end{cases}
\end{displaymath}

We obtain $\left[ A^{n_1} _{\epsilon(u_1)} , \ldots , A^{n_j} _{\epsilon(u_{j})} , \ldots , A^{n_r} _{\epsilon(u_r)} \right] = \left[ A^{n_1} _{-\epsilon(u_1)} , \ldots , A^{n_j} _{-\epsilon(u_{j})} , \ldots , A^{n_r} _{-\epsilon(u_r)} \right]$ for $t = m_1$. This provides us a well-defined digital continuous map on $Q_J \times I_{m_1 + m_2}$.

We define $h_i$ on $U_i = U_{n_1 - 1 + k} = \bigcup_{|J| = k} Q_J$ by setting $h_i |_{Q_J \times I_{m_1 + m_2}} = h_J$, for $k = n_1 - 1 + k$.

\underline{For $i = n_1 + r - 1$:} For $\bar{u} \in S_{n_1} ^0 \times \cdots \times S_{n_r} ^0 = S_{\bar{n}} ^0$ and $t \in I_{m_1 + m_2}$, we set
\begin{displaymath}
h_i(\bar{u},t) = \left[ \rho(u_1 , A^{n_1} _{\epsilon(u_1)})(t) , \ldots , \rho(u_r , A^{n_r} _{\epsilon(u_r)})(t) \right].
\end{displaymath}
We seperate two cases for $t \in I_{m_1 + m_2}$ and $m_1 \leq t \leq m_1 + m_2$.\\
If $\epsilon(u_{1})  = 1$, then $h_i(\bar{u},t) = [A^{n_1} _{\epsilon(u_1)}, \omega_2,\ldots , \omega_r]$ where, for $2\leq q \leq r$, 
\begin{displaymath}
\omega_q =  \begin{cases} A^{n_q} _{\epsilon(u_q)}, \text{ if } \epsilon(u_q) = 1 \\ \sigma_0 (t - m_1), \text{ if } \epsilon(u_q) = -1 \end{cases}
\end{displaymath}
If $\epsilon(u_{1})  = -1$, then $h_i(\bar{u},t) = [A^{n_1} _{-\epsilon(u_1)}, \omega_2,\ldots , \omega_r]$ where, for $2\leq q \leq r$, 
\begin{displaymath}
\omega_q =  \begin{cases} A^{n_q} _{-\epsilon(u_q)}, \text{ if } \epsilon(u_q) = -1 \\ \sigma_0 (t - m_1),  \text{ if } \epsilon(u_q) = 1 \end{cases}
\end{displaymath}

For $t = m_1$, $\left[ A^{n_1} _{\epsilon(u_1)} , \ldots, A^{n_r} _{\epsilon(u_r)} \right] = \left[ A^{n_1} _{-\epsilon(u_1)}, \ldots, A^{n_r} _{-\epsilon(u_r)} \right]$. This gives a well-defined the digital continuous map on $S_{\bar{n}} ^0 \times I_{m_1 + m_2}$.

We have $h_i (\bar{u},t) = h_i (-\bar{u},t)$ for any $t \in I_{m_1 + m_2}$ and for any $i$ where $\bar{u} \in U_i$ and and $\bar{u}\in U_i$. According to the maps, we obtain $h_i (\bar{u},0) = [\bar{u}]$ for $0 \leq i \leq n_1+r-1$,
\begin{displaymath}
h_i (\bar{u},m_1 + m_2) =
\begin{cases}
[A_+ ^{i}, A_+ ^{n_2},  \ldots, A_+ ^{n_r}], &0 \leq i \leq n_1 - 1 \\
[A_+ ^{n_1}, A_+ ^{n_2},  \ldots, A_+ ^{n_r}], &n_1 \leq i \leq n_1 + r - 1.
\end{cases}
\end{displaymath}
For any $i$ and $\bar{u}\in U_i$, $h_{i_{\bar{u}}} : I_{m_1 + m_2} \to \digp$ defined by $h_{i_{\bar{u}}} (t) = h_i (\bar{u} , t)$ is digitally continuous and for any $t \in I_{m_1 + m_2}$, $h_{i_t} : U_i \to \digp$ defined by $h_{i_t}(\bar{u}) = h_i (\bar{u} , t)$ is digitally continuous as well. Hence, for any $i$, $V_i={U_i}/{\sim}$ is a subset of $\digp$ and we gain a digital homotopy function $\bar{h}_i : V_i \to (\digp)^{I_{m_1 + m_2}}$ such that $\bar{h}_i ([\bar{u}],0) = [\bar{u}]$ for $0 \leq i \leq n_1+r-1$ and
\begin{displaymath}
\bar{h}_i ([\bar{u}],m_1 + m_2) =
\begin{cases}
[A_+ ^{i}, A_+ ^{n_2},  \ldots, A_+ ^{n_r}], &0 \leq i \leq n_1 - 1 \\ 
[A_+ ^{n_1}, A_+ ^{n_2},  \ldots, A_+ ^{n_r}], &n_1 \leq i \leq n_1 + r - 1.
\end{cases}
\end{displaymath}
For any $i$ and $[\bar{u}] \in V_i$, $\bar{h}_{i_{[\bar{u}]}} : I_{m_1 + m_2} \to \digp$ defined by $\bar{h}_{i_{[\bar{u}]}} (t) = \bar{h}_i ([\bar{u}] , t)$ is digitally continuous and for any $t \in I_{m_1 + m_2}$, $\bar{h}_{i_t} : V_i \to \digp$ defined by $\bar{h}_{i_t}([\bar{u}]) = \bar{h}_i ([\bar{u}] , t)$ is also digitally continuous. In addition, we obtain a cover of  $\bigcup_{i=0} ^{n_1 + r - 1} V_i$ of $\digp$ and each inclusion map $V_i \hookrightarrow \digp$ is digitally nullhomotopic. Therefore, we prove that $\dcat (\digp) \leq n_1 + r - 1$.
\end{proof}

\begin{ex} We specify the construction above for $n_1=1$, $n_2=2$ and $r=2$. In other words, we show that 
\begin{displaymath}
\dcat(\text{d-}P) = \dcat \left( \dfrac{S_{\text{min}} ^{1} \times S_{\text{min}} ^{2}}{\sim} \right) \leq n_1+r-1 = 2. 
\end{displaymath}

\begin{figure}[H]
\centering
\scalebox{0.08}{\includegraphics{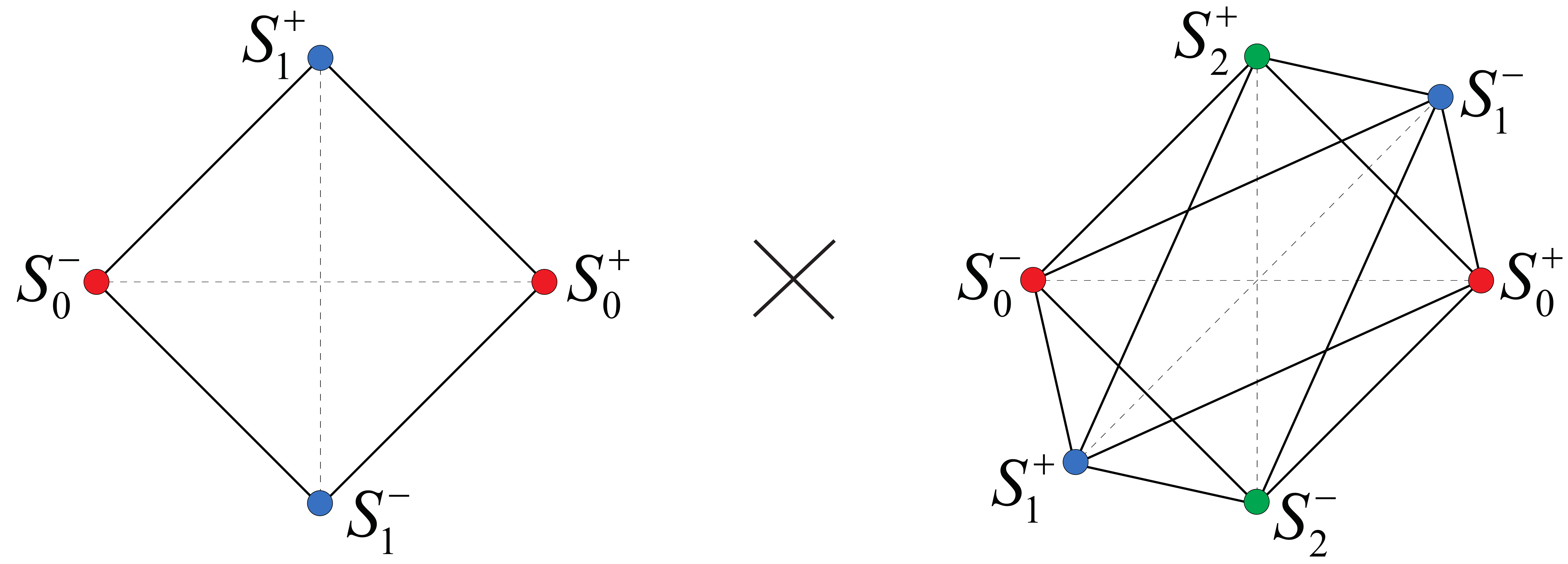}}
\caption{$S_{\text{min}} ^{1} \times S_{\text{min}} ^{2}$}
\label{sp2}
\end{figure}

As before, assume that $\rho (A,B):I_{m_1} \to S_{\text{min}} ^{n}$ is the digital path from $A$ to $B$ for non-antipodal points $A,B \in S_{\text{min}} ^{n}$ with $A \neq B$. Note that $\rho (-A,-B) = -\rho (A,B)$. Moreover, let $\sigma_0 : I_{m_2} \to S_{\text{min}} ^{n}$ be the fixed meridian digital path from $A^n _-$ to $A^n _+$ with $\sigma_0 (0) = A^n _-$ and $\sigma_0 (1) = A^n _+$.

Fix the set $U_i$ in the following notation:
\begin{displaymath}
U_i = \begin{cases} S_0^{0} \times S_{\text{min}} ^{1}, &  i = 0 \\
S_1^{0} \times S_{\text{min}} ^{1} \cup S_{\text{min}} ^{0} \times S_2^{0}, & i = 1 \\
S_1^{0} \times S_2^{0}, & i = 2
\end{cases}
\end{displaymath}
and here we have $S_{\text{min}} ^{1} \times S_{\text{min}} ^{2} \subset \bigcup_{i=0} ^{2} U_i$.

For $i = 0$, the digital homotopy function $h_0 : U_0 \times I_{m_1 + m_2} \to d$-$P = (S_{\text{min}} ^{1} \times S_{\text{min}} ^{2})/\sim$ is defined by
\begin{flalign*}
(u_1,u_2,t) \in S_0^{+} \times S_{\text{min}} ^{1} \times I_{m_1 +m_2}, \text{ }& h_0 (u_1,u_2,t) = \left[ \rho(u_1, A^0 _+)(t) , \rho(u_2, A^{2} _+)(t) \right],\\ \text{and} \text{ } 
(u_1,u_2,t) \in S_0^{-} \times S_{\text{min}} ^{1} \times I_{m_1 +m_2}, \text{ }& h_0 (u_1,u_2,t) = \left[ \rho(u_1, A^0 _-)(t) , \rho(u_2, A^{2} _-)(t) \right].
\end{flalign*}

For $i = 1$, the digital homotopy function $h_1 : U_1 \times I_{m_1 + m_2} \to d$-$P = (S_{\text{min}} ^{1} \times S_{\text{min}} ^{2})/\sim$ is given by
\begin{flalign*}
(u_1,u_2,t) \in S_1^{+} \times S_{\text{min}} ^{1} \times I_{m_1 + m_2}, \text{ }& h_{1} (u_1,u_2,t) = \left[ \rho(u_1, A^{1} _+)(t) , \rho(u_2, A^{2} _+)(t) \right],\\
(u_1,u_2,t) \in S_1^{-} \times S_{\text{min}} ^{1} \times I_{m_1 + m_2}, \text{ }& h_{1} (u_1,u_2,t) = \left[ \rho(u_1, A^{1} _-)(t) , \rho(u_2, A^{2} _-)(t) \right],\\
(u_1,u_2,t) \in S_{\text{min}} ^{0} \times S_2^{+} \times I_{m_1 + m_2}, \text{ }& h_{1} (u_1,u_2,t) = \left[ \rho(u_1, A^{1} _+)(t) , \rho(u_2, A^{2} _+)(t) \right]\\ \text{and} \text{ }
(u_1,u_2,t) \in S_{\text{min}} ^{0} \times S_2^{-} \times I_{m_1 + m_2}, \text{ }& h_{1} (u_1,u_2,t) = \left[ \rho(u_1, A^{1} _-)(t) , \rho(u_2, A^{2} _-)(t) \right].
\end{flalign*}

For $i = 2$, the digital homotopy function $h_{2} : U_{2} \times I_{m_1 + m_2} \to d$-$P = (S_{\text{min}} ^{1} \times S_{\text{min}} ^{2})/\sim$ is considered as follows:
\begin{flalign*}
(u_1,u_2,t) \in S_1^{+} \times S_2^{+} \times I_{m_1 + m_2} ,\text{ }& h_{2}(u_1,u_2,t) = \left[ \rho(u_1, A^{1} _+)(t) , \rho(u_2, A^{2} _+)(t) \right]\\ \text{and} \text{ }
(u_1,u_2,t) \in S_1^{-} \times S_2^{-} \times I_{m_1 + m_2} ,\text{ }& h_{2}(u_1,u_2,t) = \left[ \rho(u_1, A^{1} _-)(t) , \rho(u_2, A^{2} _-)(t) \right].
\end{flalign*}
	
If $(u_1,u_2,t) \in S_1^{+} \times S_2^{-} \times I_{m_1 + m_2}$, then we have
$$h_{2} (u_1,u_2,t) = \begin{cases} \left[ \rho(u_1, A^{1} _+)(t) , \rho(u_2, A^{2} _-)(t) \right], & 0 \leq t \leq m_1 \\
\left[ A^{1} _+ , \sigma_0 (t-m_1) \right], & m_1 \leq t \leq m_1+m_2.
\end{cases}$$
Notice that $h_{2}$ is well-defined digital continuous on $S_1^{+} \times S_2^{-} \times I_{m_1 + m_2}$ since there exists $\left[ A^{1} _+ , A^{2} _- \right] = \left[ A^{1} _+ , A^{2} _- \right]$ at $t = m_1$.\\
If $(u_1,u_2,t) \in S_1^{-} \times S_2^{+} \times I_{m_1 + m_2}$, then we get
$$h_{2} (u_1,u_2,t) = \begin{cases} \left[ \rho(u_1, A^{1} _-)(t) , \rho(u_2, A^{2} _+)(t) \right], & 0 \leq t \leq m_1 \\
\left[ A^{1} _+ , \sigma_0 (t-m_1) \right], & m_1 \leq t \leq m_1+m_2.
\end{cases}$$ 
In this case, $h_{2}$ is well-defined digital continuous on $S_1^{-} \times S_2^{+} \times I_{m_1 + m_2}$ since we have $\left[ A^{1} _- , A^{2} _+ \right] = \left[ A^{1} _+ , A^{2} _- \right]$ at $t = m_1$.
 
Given any $(u_1, u_2) \in U_i$, then we have $(-u_1, -u_2) \in U_i$ and $h_i (u_1, u_2,t) = h_i (-u_1,-u_2,t)$ for any $t \in I_{m_1 + m_2}$ and any $0 \leq i \leq 2$. We obtain $h_i (u_1, u_2,0) = [u_1, u_2]$ for $0 \leq i \leq 2$, and
\begin{displaymath}
h_i(u_1, u_2,m_1 + m_2) =
\begin{cases}
\left[ A^{0} _+ , A^{2} _+ \right], &i = 0, \\
\left[ A^{1} _+ , A^{2} _+ \right], &i=1,2.
\end{cases} 
\end{displaymath}

The map $h_{i_{(u_1, u_2)}} : I_{m_1 + m_2} \to \text{d-}P$ is defined by $h_{i_{(u_1, u_2)}}(t) = h_i((u_1, u_2), t)$ is digital continuous for any $i$ and $(u_1, u_2) \in U_i$, and the map $h_{i_t}:U_i \to \text{d-}P$ given by $h_{i_t}((u_1, u_2)) = h_i((u_1, u_2), t)$ is digital continuous for any $t \in I_{m_1 + m_2}$. These yield a digital homotopy function $\bar{h}_i : V_i \times I_{m_1 + m_2} \to \text{d-}P$ such that  $\bar{h}_i ([u_1, u_2],0) = [u_1, u_2]$ for $0 \leq i \leq 2$, and
\begin{displaymath}
\bar{h}_i ([u_1, u_2],m_1 + m_2) =
\begin{cases}
\left[ A^{0} _+ , A^{2} _+ \right], &i= 0, \\
\left[ A^{1} _+ , A^{2} _+ \right], &i = 1,2.
\end{cases}
\end{displaymath}
Thus, we obtain $\bigcup_{i = 0} ^{n_1 +1} V_i$ where $V_i= U_i / \sim$ is a cover of $\text{d-}P$ provides that each inclusion $V_i \hookrightarrow \text{d-}P$ is nullhomotopic. Namely, each $V_i \subset \text{d-}P$ is a categorical subset. Hence, we conclude that $\dcat (\text{d-}P) \leq 2$.
\end{ex}

\begin{cor} Given $\digp$ digital projective product space, we have
\begin{displaymath}
\digtc(\digp) \leq \dcat (\digp) \leq 2(n_1 + r - 1).
\end{displaymath}
\end{cor}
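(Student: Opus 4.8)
The plan is to obtain this corollary by chaining together two facts already available in the paper, with no further geometric construction needed. The first is the general digital estimate $\digtc_{\kappa}(X)\le 2\,\dcat_{\kappa}(X)$, recorded as a consequence of the inequality $\dcat_{\kappa}(X)\le\digtc_{\kappa}(X)\le\dcat_{\lambda}(X\times X)$ valid for any digital path-connected space $X$. The second is the bound $\dcat(\digp)\le n_1+r-1$ established in the preceding theorem. Thus the corollary is a pure specialization, and the only real work is checking the hypothesis under which the first inequality applies.

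Before combining them I would verify the standing assumption that $\digp$ is digital path-connected, since this is required by the $\dcat$--$\digtc$ inequality from \cite{KI}. Each minimal digital sphere $S_{\text{min}}^{n_i}$ with $n_i\ge 1$ is digital $\kappa$-connected, so under the maximal product adjacency the product $\digs=S_{\text{min}}^{n_1}\times\cdots\times S_{\text{min}}^{n_r}$ is connected; since the quotient map $\digs\to\digp$ identifying $\bar{x}$ with $-\bar{x}$ is digital continuous and surjective, its image $\digp$ is digital path-connected as well.

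With the hypothesis verified, I would apply the inequality to $X=\digp$ to obtain $\digtc(\digp)\le 2\,\dcat(\digp)$ and then substitute $\dcat(\digp)\le n_1+r-1$, which yields $\digtc(\digp)\le 2(n_1+r-1)$. I do not expect any genuine obstacle here: the entire argument is a direct reuse of the two previously proved estimates, and the only point deserving a moment's attention is the path-connectedness of $\digp$, which follows immediately from the connectedness of the defining spheres $S_{\text{min}}^{n_i}$.
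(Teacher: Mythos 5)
Your proposal is correct and is exactly the argument the paper intends: the corollary is a direct chaining of the general estimate $\digtc_{\kappa}(X)\le 2\,\dcat_{\kappa}(X)$ with the bound $\dcat(\digp)\le n_1+r-1$ from the preceding theorem, and your check that $\digp$ is digital path-connected is the right (and only) hypothesis to verify. Note only that the corollary as printed reads $\digtc(\digp)\le\dcat(\digp)\le 2(n_1+r-1)$, which must be a typo for $\digtc(\digp)\le 2\,\dcat(\digp)\le 2(n_1+r-1)$, since in general $\dcat\le\digtc$ and not the reverse; your reading supplies the missing factor of $2$ correctly.
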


\begin{defn}
A digitally continuous map $f: \mathbb{Z}^n \times \mathbb{Z}^n \to \mathbb{Z}^k$ is called a digital non-singular map, if it holds the following conditions:
\begin{itemize}
	\item $f(ax,by) =ab f(x,y)$ for every $x,y \in \mathbb{Z}^n$ and $a,b \in \mathbb{Z}$.
\end{itemize}	
\begin{itemize}	
	\item $f(x,y) = 0$ implies that either $x = 0$ or $y = 0$.
\end{itemize}
\end{defn}

\begin{prop} If one can find a digital non-singular map $f: \mathbb{Z}^{n+1} \times \mathbb{Z}^{n+1} \to \mathbb{Z}^{k+1}$ where $n+1 \leq k$, then $\text{d-}P^n$ has a motion planner with $k$ local acts, which means 
 \begin{displaymath}
 \dcat (\text{d-}P^n) \leq k.
 \end{displaymath}
\end{prop}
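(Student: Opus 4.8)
The plan is to mimic the Farber--Tabachnikov--Yuzvinsky scheme~\cite{FTY} in the digital category: the $k+1$ coordinate functions of the non-singular map $f$ will index the local domains of a motion planner on $\text{d-}P^n \times \text{d-}P^n$. Because the theorem above gives $\dcat_{\kappa}(X) \le \digtc_{\kappa}(X)$ for any digital path-connected $X$, it is enough to construct such a motion planner with $k+1$ local domains, that is, to prove $\digtc(\text{d-}P^n) \le k$; the stated bound $\dcat(\text{d-}P^n) \le k$ then follows immediately.

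First I would write $f = (f_1, \ldots, f_{k+1})$ and, for $1 \le i \le k+1$, put
\begin{displaymath}
W_i = \{ ([x],[y]) \in \text{d-}P^n \times \text{d-}P^n \ : \ f_i(x,y) \neq 0 \}.
\end{displaymath}
The normalization $f(ax,by)=ab\,f(x,y)$ with $a,b \in \{-1,1\}$ shows that the condition ``$f_i(x,y)\neq 0$'' does not depend on which of the two representatives is chosen for $[x]$ and for $[y]$, so each $W_i$ is a genuine subset of the quotient. Every vertex of $S_{\text{min}}^n$ is a nonzero lattice point, so the non-singularity clause forces $f(x,y)\neq 0$ for every pair of vertices; hence some coordinate $f_i(x,y)$ is nonzero and the family $\{W_i\}_{i=1}^{k+1}$ covers $\text{d-}P^n \times \text{d-}P^n$.

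Next I would produce, over each $W_i$, a digital section $s_i$ of the endpoint map $\pi \colon (\text{d-}P^n)^{I_{m_1+m_2}} \to \text{d-}P^n \times \text{d-}P^n$. On $W_i$ the sign of $f_i$ selects a preferred lift: among the four representatives $(\pm x, \pm y)$ exactly the two simultaneously negated pairs satisfy $f_i>0$, so $(x,y)$ is pinned down up to the global involution $(x,y)\mapsto(-x,-y)$. Using the digital path $\rho(\cdot,\cdot)$ between non-antipodal vertices and the compatibility $\rho(-A,-B)=-\rho(A,B)$ introduced in the proof above, the projected assignment $([x],[y]) \mapsto \big(t\mapsto [\rho(x,y)(t)]\big)$ is insensitive to the remaining global sign and hence descends to a well-defined digital path in $\text{d-}P^n$ from $[x]$ to $[y]$. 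This is the candidate local rule $s_i$.

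The hard part will be the two verifications in which the digital setting genuinely departs from the continuous one. Since $\rho$ is defined only for non-antipodal endpoints, I must exclude the possibility that the $f_i$-normalized lifts satisfy $x=-y$; this occurs only on the diagonal $[x]=[y]$ and exactly when $f_i(x,x)<0$, and I would dispose of it by the pole-routing device of the previous proof, replacing the direct path by a concatenation through the poles $A^n_{\pm}$ and the fixed meridian $\sigma_0$, which still closes up modulo the antipodal identification because the two endpoint classes agree at the splicing time $t=m_1$. The more delicate point, and the one I expect to be the main obstacle, is the digital continuity of $s_i$ in the sense required for $\digtc$: I must check that $\lambda$-adjacent pairs of $W_i$ are carried to $\lambda$-connected paths, which amounts to showing that both the sign-selection $([x],[y])\mapsto(x,y)$ and the resulting paths $\rho$ move through adjacent vertices of $S_{\text{min}}^n$ as the endpoints do. Granting these, $\{W_i\}_{i=1}^{k+1}$ is a cover of $\text{d-}P^n \times \text{d-}P^n$ by $k+1$ domains each admitting a digital section of $\pi$, whence $\digtc(\text{d-}P^n)\le k$ and therefore $\dcat(\text{d-}P^n)\le \digtc(\text{d-}P^n)\le k$.
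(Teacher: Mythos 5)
Your proposal is correct and follows essentially the same Farber--Tabachnikov--Yuzvinsky scheme as the paper: cover $\text{d-}P^n \times \text{d-}P^n$ by the non-vanishing loci of the scalar components of $f$ (well defined on the quotient by the relation $f(ax,by)=ab\,f(x,y)$), use the sign of the active component to normalize the lift up to the simultaneous involution, join the normalized endpoints by the canonical digital paths, and pass from $\digtc(\text{d-}P^n)\le k$ to the stated $\dcat$ bound. The only divergence is bookkeeping: the paper confines the diagonal to a single distinguished chart $U_{\theta_1}^{\prime}$ with $\theta_1(u,u)>0$ (arranged using $n+1<k$) and takes constant paths there, while you keep diagonal points in every chart and patch the antipodal-lift case with meridian detours; both arguments leave the digital-continuity verification of the local sections at the same informal level, so this is a stylistic rather than substantive difference.
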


\begin{proof} We assume that $\theta: \mathbb{Z}^{n+1} \times \mathbb{Z}^{n+1} \to \mathbb{Z}$ be a scalar digital continuous map with the property that $\theta(au, bv) = ab\theta(u, v)$ for all $(u, v) \in S_{\text{min}} ^{n} \times S_{\text{min}} ^{n}$ and $a, b \in \mathbb{Z}$.

Let $U_{\theta} \subset \text{d-}P^n \times \text{d-}P^n$ represents the set of all pairs $(u, v)$ of points in $\mathbb{Z}^{n+1}$ such that $u \neq v$ and $\theta(u, v) \neq 0$ for some points $u, v \in S_{\text{min}} ^{n}$.

We assert that there is continuous motion planning in $U_{\theta}$. Namely, there exists  digital continuous map $s$ defined on $U_{\theta}$ with values for the space of digital continuous paths in the digital projective space $\text{d-}P^n$ such that for each pair $(u, v) \in U_{\theta}$ the digital path $s(u, v)(t)$, $t \in I_m$, begins at point $u$ and terminates at point $v$. We may provide points in $S_{\text{min}} ^{n}$ such that $\theta(u, v) > 0$ by considering the construction of $\text{d-}P^n$.  In this instance, we may take $-u$, $-v$ instead of $u$, $v$. Notice that $u$, $v$ and equivalently $-u$, $-v$ dictate the indifferent orientation of the plane based on these points. The intended motion planning digital map $s$ occurs in rotating $u$ to $v$ in this plane, in the positively directed by orientation. 

Furthermore, we suppose that $\theta: \mathbb{Z}^{n+1} \times \mathbb{Z}^{n+1} \to \mathbb{Z}$ is called positive if we have $\theta(u, u) > 0$ for any $u \in \mathbb{Z}^{n+1}$. We may choose a lightly larger set $U_{\theta}^{\prime} \subset \text{d-}P^n \times \text{d-}P^n$ derive from $U_{\theta}$ which is identified as the set of all pairs of points of the form $(u, v)$ where $\theta(u,v) \neq 0$ for some $u, v \in S_{\text{min}} ^{n}$. We see that the set $U_{\theta}^{\prime}$ contains all the pairs of the points $(u,u)$. We describe the digital path from $u$ to $v$ for $u \neq v$ as rotating from $u$ to $v$ in the plane, based on $u$ and $v$ in the positively indicated by the orientation. We take the constant digital path at point $u$. Therefore, we preserve digital continuity. 

A digital non-singular map $f: \mathbb{Z}^{n+1} \times \mathbb{Z}^{n+1} \to \mathbb{Z}^{k}$ admits $k$ scalar digital map $\theta_1, \ldots, \theta_k : \mathbb{Z}^{n+1} \times \mathbb{Z}^{n+1} \to \mathbb{Z}$ and the specified as before $U_{\theta_i}$ cover the product $\text{d-}P^n \times \text{d-}P^n$ except the diagonal. Due to $n+1 < k$, we may use such an $f$ as the initial digital non-singular map with the property that for any $u \in \mathbb{Z}^{n+1}$ the first coordinate $\theta_1(u, u)$ is positive. The sets $U_{\theta_1}^{\prime}, U_{\theta_2}, \ldots, U_{\theta_k}$ is a cover of $\text{d-}P^n \times \text{d-}P^n$. We have stated an explicit motion planning instructions over any of these sets. Hence, we get the inequality $\digtc(\text{d-}P^n) \leq k$.

\end{proof}

\begin{theo} If $\digp := S_{\text{min}} ^{n_1} \times \cdots \times S_{\text{min}} ^{n_r} / (\bar{x} \sim -\bar{x})$ is the digital projective product space where $\bar{n} = (n_1 , \ldots , n_r)$ and $n_1 \leq \ldots \leq n_r$, then we have
\begin{displaymath}
\digtc (\digp) \leq \digtc (\dign) + \sum_{q=2} ^r \digtc (S_{\text{min}} ^{n_q}).
\end{displaymath}
\end{theo}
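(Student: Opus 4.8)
The plan is to build an explicit motion planner on $\digp \times \digp$ by fibering a motion planner over $\dign \times \dign$ and combining it with motion planners on the remaining factor spheres $S_{\text{min}}^{n_q} \times S_{\text{min}}^{n_q}$, exploiting the double cover $p \colon \digs \to \digp$ together with the first-coordinate projection $\pi_1 \colon \digp \to \dign$, $[\bar u] \mapsto [u_1]$. Concretely, I would fix optimal data: a cover $A_0, \ldots, A_{c_1}$ of $\dign \times \dign$ with digital sections $\sigma_i$, where $c_1 = \digtc(\dign)$; and for each $q = 2, \ldots, r$ a cover $B_0^{(q)}, \ldots, B_{c_q}^{(q)}$ of $S_{\text{min}}^{n_q} \times S_{\text{min}}^{n_q}$ with digital sections $\tau_j^{(q)}$, where $c_q = \digtc(S_{\text{min}}^{n_q})$. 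The whole argument is then organized around a diagonal grouping of these covers, since $c_1 + \sum_{q=2}^r c_q$ is exactly the count that grouping produces.

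The heart of the construction is a \emph{fibered} section indexed by a multi-index $(i, j_2, \ldots, j_r)$. Given $([\bar u],[\bar v])$ whose first coordinates $([u_1],[v_1])$ lie in $A_i$, I would first run $\sigma_i$ to obtain a digital path in $\dign$ from $[u_1]$ to $[v_1]$. Using the covering $q \colon S_{\text{min}}^{n_1} \to \dign$, this path lifts uniquely once a representative $u_1$ of $[u_1]$ is chosen, and its terminal point singles out a representative $\hat v_1 \in \{v_1, -v_1\}$, hence a \emph{matched representative} $\hat{\bar v} = (\hat v_1, \ldots, \hat v_r)$ of $[\bar v]$. With the signs so fixed, the remaining coordinates move independently: for each $q \ge 2$ the pair $(u_q, \hat v_q)$ lies in some $B_{j_q}^{(q)}$, and $\tau_{j_q}^{(q)}$ supplies a digital path from $u_q$ to $\hat v_q$ in $S_{\text{min}}^{n_q}$. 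Running the lifted first-coordinate path alongside these $r-1$ sphere paths gives a digital path in $\digs$ from $\bar u$ to $\hat{\bar v}$, and composing with $p$ yields the desired path in $\digp$. As in the category theorem of this section, I would verify that replacing $\bar u$ by $-\bar u$ replaces $\hat{\bar v}$ by $-\hat{\bar v}$, so that the assignment descends through $\bar x \sim -\bar x$ and is well defined on $\digp \times \digp$.

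To pass from the multiplicative count $(c_1+1)\prod_{q\ge 2}(c_q+1)$ to the additive bound, I would group the fibered pieces by the value $k = i + j_2 + \cdots + j_r$, for $k = 0, 1, \ldots, c_1 + \sum_{q\ge 2} c_q$, exactly as in the subadditivity argument for the topological complexity of a product. This yields $c_1 + \sum_{q=2}^r c_q + 1$ subsets $W_k$ covering $\digp \times \digp$, and the claimed inequality follows as soon as each $W_k$ is shown to carry a single digital section.

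The main obstacle is precisely this last point. Within a fixed level $k$ several multi-indices contribute, and one must produce one \emph{digitally continuous} section on their union $W_k$. Unlike the topological case, no partitions of unity are available, so I would argue combinatorially: the digital images here are finite, and the defining sign conditions together with the disjointness of the $Q_J$-type strata already used in the category computation should separate the contributing pieces into mutually non-adjacent parts, on each of which the fibered section is defined; a map that is digitally continuous on non-adjacent pieces is digitally continuous on their union. The two places demanding the most care are checking that the covering lift assigns the signs $\hat v_q$ consistently across overlaps $A_i \cap A_{i'}$, so that each sphere planner $\tau_{j_q}^{(q)}$ receives a well-defined input, and confirming the continuity of the concatenated path as a map into the digital path space $(\digp)^{I_{m}}$. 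Once these compatibilities are in place, the homotopy invariance of $\digtc$ recorded earlier ensures the estimate is independent of the chosen representatives, completing the proof.
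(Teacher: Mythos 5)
Your overall architecture coincides with the paper's: a planner on the first factor coming from the projective space, Farber-type planners on the remaining spheres, descent through $\bar{x}\sim-\bar{x}$, and a diagonal regrouping by $k=i+\sum_{q}j_q$ to turn the multiplicative count into the additive one. The genuine gap is in the descent step, and it is caused by your decision to start from \emph{arbitrary} optimal covers $B^{(q)}_j$ with sections $\tau^{(q)}_j$ on $S_{\text{min}}^{n_q}\times S_{\text{min}}^{n_q}$. For your assignment to be well defined on $\digp\times\digp$, the path produced from the representative $(-\bar{u},-\hat{\bar{v}})$ must be the antipode of the one produced from $(\bar{u},\hat{\bar{v}})$; componentwise this forces each sphere planner to satisfy $\tau^{(q)}(-u_q,-v_q)=-\tau^{(q)}(u_q,v_q)$ and each set $B^{(q)}_j$ to be invariant under $(u,v)\mapsto(-u,-v)$. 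An arbitrary optimal cover has neither property: $(-u_q,-v_q)$ may lie in a different $B^{(q)}_{j'}$ whose section is unrelated to $\tau^{(q)}_j(u_q,v_q)$, and then two representatives of the same class receive genuinely different paths in $\digp$. This is exactly why the paper does not quote $\digtc(S_{\text{min}}^{n_q})$ abstractly but builds the explicit equivariant planner (the sets $V_0,V_1$, resp.\ $V_0,V_1,V_2$, cut out by the antipodally invariant conditions $v_q=\pm u_q$ and $u_q=\pm a_q$, with $\rho(-A,-B)=-\rho(A,B)$, symmetric meridians $\sigma$, and $\sigma_0(-a_q,a_q)=-\sigma_0(a_q,-a_q)$), and why on the first factor it replaces your covering-space lift by the scalar components $\theta_i$ of a digital non-singular map: the sign of $\theta_i(u_1,v_1)$ selects the representative, i.e.\ $\rho(u_1,v_1)$ versus $\rho(-u_1,v_1)$, in a way that is manifestly compatible with the relation.

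A second, smaller issue: lifting $\sigma_i$ through $S_{\text{min}}^{n_1}\to\dign$ to single out the matched representative $\hat{v}_1$ presupposes a unique digital path-lifting property for this two-fold quotient, which the paper never establishes and, by the device above, never needs. If you repair the main gap by taking the equivariant sphere planners and the $\theta_i$-based planner on the first factor, your construction becomes the paper's proof; the diagonal grouping into sets $W_k$ and the resulting bound $\digtc(\dign)+\sum_{q=2}^{r}\digtc(S_{\text{min}}^{n_q})$ are then exactly as you describe, with the continuity of each section on $W_k$ resting on the pairwise disjointness of the contributing pieces, a point the paper itself treats only briefly.
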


\begin{proof} We consider the cartesian product of two digital projective product spaces $\digp \times \digp = (\digs \times \digs) / \sim$ as the quotient of $(S_{\text{min}} ^{n_1} \times S_{\text{min}} ^{n_1}) \times \cdots \times (S_{\text{min}} ^{n_r} \times S_{\text{min}} ^{n_r})$ by using the well-known isomorphism for the relation
\begin{equation}\label{relation}
(u_1,v_1,\dots,u_r,v_r)\sim (u'_1,v'_1,\dots,u'_r,v'_r) \Leftrightarrow \left\{\begin{array}{rcc} 
\forall i& u_i=u'_i \text{ and } v_i=v'_i \\
\text{or } \forall i& u_i=-u'_i \text{ and } v_i=v'_i \\
\text{or } \forall i& u_i=u'_i \text{ and } v_i=-v'_i \\
\text{or } \forall i& u_i=-u'_i \text{ and } v_i=-v'_i .\\
\end{array}\right.
\end{equation}
We set the construction of motion planners for the digital projective space $\dign$ and for a digital sphere $S_{\text{min}} ^{n_q}$ which has a vision from ~\cite{FTY} and ~\cite{Far}, respectively. We will get a motion planner on $\digp$ by gathering them.

Assume that $\digtc (\dign)=k$. According to Proposition 3.5, there exists a digital non-singular map $\theta=(\theta_2,\cdots,\theta_k) : \mathbb{Z}^{n_1 + 1} \times \mathbb{Z}^{n_1 + 1} \to \mathbb{Z}^{k+1}$. The $k+1$ scalar digital maps $\theta_2 , \theta_1 , \ldots , \theta_{k}: \mathbb{Z}^{n_1 + 1} \times \mathbb{Z}^{n_1 + 1} \to \mathbb{Z}$ have the property $\theta_i(au_1,bv_1)=ab\theta(u_1,v_1)$ for $(u_1,v_1) \in S_{\text{min}} ^{n_1} \times S_{\text{min}} ^{n_1}$ and $a,b \in \mathbb{Z}$ and do not become zero simultaneously. We suppose that $\theta_2(u_1,u_1)>0$ for any $u_1 \in S_{\text{min}} ^{n_1}$ from the definition of the digital sphere for $n_1 + 1 < k$. Let
\begin{flalign*}
U_0 &= \left\{ (u_1,v_1) \in S_{\text{min}} ^{n_1} \times S_{\text{min}} ^{n_1} \text{ : } \theta_2 (u_1,v_1) \neq 0 \right\} \hspace{1cm} ((u_1,u_1) \in U_0) \\ 
U_i &= \left\{ (u_1,v_1) \in S_{\text{min}} ^{n_1} \times S_{\text{min}} ^{n_1} \text{ : } \text{for all } 0 \leq n < i, \theta_n (u_1,v_1) = 0 \text{ and }\theta_i (u_1,v_1) \neq 0 \right\},
\end{flalign*}
where $1 \leq i \leq k-1$.

Notice that all the sets are compatible with the equivalence relation on $S_{\text{min}} ^{n_1} \times S_{\text{min}} ^{n_1}$ deduced by the antipodal relation $u_1 \sim -u_1$ on $S_{\text{min}} ^{n_1}$.

Note that all the sets $U_i$ are disjoint and that $\bigcup_{i=0} ^k U_i$ is a cover of $S_{\text{min}} ^{n_1} \times S_{\text{min}} ^{n_1}$.

Let $\rho(A,B): I_{m_1} \to S_{\text{min}} ^n$ be the digital path from $A$ to $B$ for except antipodal points $A,B$ $(A \neq -B)$ of digital sphere $S_{\text{min}} ^n$. Discern that $\rho(-A , -B) = -\rho(A,B)$ and that $\rho(A,A)$ is the constant digital path.

For $0 \leq i \leq k$, we define the map $\psi_i: U_i \to (\dign)^{I_{m_1 + m_2}}$ by
\begin{displaymath}
\psi_i (u_1,v_1) = \begin{cases} 
[\rho(u_1,v_1)], &\text{ if } \theta_i (u_1,v_1) > 0 \\
[\rho(-u_1,v_1)], &\text{ if } \theta_i (u_1,v_1) < 0.
\end{cases}
\end{displaymath}
We obtain $\theta_0 (u_1,u_1) > 0$ for any $u_1 \in S_{\text{min}} ^n$ and we get $\theta_0(u_1,-u_1)=\theta_0(-u_1,u_1)<0$, consequently. Therefore, we have $(u_1,u_1)\in U_0$ or $(-u_1,u_1)\in U_0$, equivalently. This allows us to assure that $\psi_i$ is well-defined on pairs of antipodal points. This map is digitally continuous on $U_i$ and satisfies $\psi_i(u_1,v_1)=\psi_i(\pm u_1,\pm v_1)$ for $0 \leq i \leq k$ and the induced map $\bar{\psi}_i : U_i /\!\sim \,\to (\dign)^{I_{m_1+m_2}}$ admits an explicit motion planner on the digital projective space $\dign$.

For $2 \leq q \leq r$, we use the following subsets of $S_{\text{min}} ^{n_q} \times S_{\text{min}} ^{n_q}$. The case that $n_q$ is odd, we take subsets
\begin{flalign*}
V_0 &= \left\{ (u_q, v_q) \in S_{\text{min}} ^{n_q} \times S_{\text{min}} ^{n_q} \text{ : } v_q \neq \pm u_q \right\}, \\ 
V_1 &= \left\{ (u_q, v_q) \in S_{\text{min}} ^{n_q} \times S_{\text{min}} ^{n_q} \text{ : } v_q = \pm u_q \right\}.
\end{flalign*}
The case that $n_q$ is even, we consider subsets
\begin{flalign*}
V_0 &= \left\{ (u_q, v_q) \in S_{\text{min}} ^{n_q} \times S_{\text{min}} ^{n_q} \text{ : } v_q \neq \pm u_q \right\}, \\
V_1 &= \left\{ (u_q, v_q) \in S_{\text{min}} ^{n_q} \times S_{\text{min}} ^{n_q} \text{ : } v_q = \pm u_q , u_q \neq \pm a_q \right\}, \\
V_2 &= \left\{ (u_q, v_q) \in S_{\text{min}} ^{n_q} \times S_{\text{min}} ^{n_q} \text{ : } v_q = \pm u_q , u_q = \pm a_q \right\}.
\end{flalign*}
Here, the fixed element $a_q = (0,\ldots,0,1) \in S_{\text{min}} ^{n_q}$ corresponds to the vanishing point of even dimensional spheres.

We describe the motion planner for a digital sphere by the paths :
\begin{itemize}
\item For $(u_q, v_q) \in V_0$ and for $(u_q,u_q)\in V_1 \cup V_2$, we consider the digital path $\rho(u_q, v_q)$.
\end{itemize}

\begin{itemize}
\item For $(u_q , -u_q) \in V_1$, we consider the digital meridian $\sigma: I_{m_2} \to S_{\text{min}} ^{n_q}$ path $\sigma(u_q , -u_q)$ from $u_q$ to $-u_q$ in the positive direction symmetrically, corresponding digital meridian from $u_q$ to $-u_q$.
\end{itemize}

\begin{itemize}
\item For $(a_q , -a_q)$, we fix digital meridian $\sigma_0: I_{m_2} \to S_{\text{min}} ^{n_q}$ path from $a_q$ to $-a_q$ and we set $\sigma_0(-a_q , a_q) = -\sigma_0 (a_q , -a_q)$.
\end{itemize}

We combine these motion planners in the following way:

Given $i \in \{ 0,1, \ldots , k \}$, let $j_q \in \{ 0,1 \}$ when $n_q$ is odd; or $j_q \in \{ 0,1,2 \}$ when $n_q$ is even for $2 \leq q \leq r$. We define the map
 \begin{displaymath}
\psi_{(i,j_2 , \ldots , j_r)} : U_i \times \prod_{q=2} ^r V_{j_q} \to (\digp)^{I_{m_1 + m_2}}
\end{displaymath} by
\begin{displaymath}
\psi_{(i,j_2 , \ldots , j_r)} (u_1,v_1,u_2,v_2, \ldots , u_r , v_r) = \begin{cases}
[\rho(u_1,v_1) , \omega_2 , \ldots , \omega_r], & \theta_i (u_1,v_1) > 0 \\
[\rho(u_1,v_1) , \omega_2^{\prime} , \ldots , \omega_r^{\prime}], & \theta_i (u_1,v_1) < 0,
\end{cases}
\end{displaymath}
where, for $n_q$ odd,

$\omega_q = \begin{cases}
\sigma(u_q, v_q), & \text{ if } v_q= -u_q \\
\rho (u_q, v_q), & \text{ otherwise, } 
%(y_q\neq \pm x_q \text{ or } y_q= x_q)
\end{cases}$

$\omega_q^{\prime} = \begin{cases}
\sigma(-u_q, v_q), & \text{ if } v_q= u_q \\
\rho (-u_q, v_q), & \text{ otherwise, } 
%(y_q\neq \pm x_q \text{ or } y_q= -x_q)
\end{cases}$

and, for $n_q$ even,

$\omega_q = \begin{cases}
\sigma (u_q, v_q), & \text{ if } v_q= -u_q , u_q \neq \pm a_q \\
\sigma_0 (u_q, v_q), & \text{ if } v_q= -u_q , u_q = \pm a_q \\
\rho (u_q, v_q), & \text{ otherwise, } 
%(y_q\neq \pm x_q \text{ or } y_q= x_q)
\end{cases}$

$\omega_q^{\prime} = \begin{cases}
\sigma (-u_q, v_q), & \text{ if } v_q = u_q , u_q \neq \pm a_q \\
\sigma_0 (-u_q, v_q), & \text{ if } v_q = u_q , u_q = \pm a_q \\
\rho (-u_q, v_q), & \text{ otherwise. } 
%(y_q \neq \pm x_q \text{ or } y_q = -x_q).
\end{cases}$

The map is the well-defined digital continuous on $U_i \times \prod_{q=2} ^r V_{j_q}$ . Moreover, the compatibility of this map does not conflict with the equivalence relation (\ref{relation}). 

For $i \in \{0,...,k\}$, $j_q \in \{ 0,1 \}$ when $n_q$ is odd; or $j_q \in \{ 0,1,2 \}$ when $n_q$ is even, $2 \leq q \leq r$ we obtain a digital continuous map 
\begin{displaymath}
\bar{\psi}_{(i , j_2 , \ldots , j_r)}: \dfrac{ U_i \times \prod_{q=2} ^r V_{j_q}}{\sim} \to (\digp)^{I_{m_1 + m_2}}
\end{displaymath}
that satisfies $\bar{\psi}_{(i,j_2 , \ldots , j_q)} (u_1,v_1,u_2,v_2, \ldots , u_r , v_r) = [\rho(u_1,v_1) , \bar{\omega}_2 , \ldots , \bar{\omega}_r]$, where for $n_q$ odd,

$\bar{\omega}_q = \begin{cases}
\sigma(u_q, v_q), & \text{ if } v_q= -u_q \\
\rho (u_q, v_q), & \text{ otherwise, } 
%(y_q\neq \pm x_q \text{ or } y_q= x_q)
\end{cases}$

and, for $n_q$ even,

$\bar{\omega}_q= \begin{cases}
\sigma (u_q, v_q), & \text{ if } v_q= -u_q , u_q \neq \pm a_q \\
\sigma_0 (u_q, v_q), & \text{ if } v_q= -u_q , u_q = \pm a_q \\
\rho (u_q, v_q), & \text{ otherwise, } 
%(y_q\neq \pm x_q \text{ or } y_q= x_q)
\end{cases}$

For $i \in \{ 0,1, \ldots , k \}$, $j_q \in \{0,1\}$ when $n_q$ is odd or; $j_q \in \{0,1,2\}$ when $n_q$ is even, $2 \leq q \leq r$,
\begin{displaymath}
W_l = \bigcup_{i+\sum_{q=2}^r j_q =l} \left( U_i \times \prod_{q=2}^r  V_q \right) \subset  (S_{\text{min}}^{n_1} \times S_{\text{min}}^{n_1}) \times  \cdots \times (S_{\text{min}}^{n_r} \times S_{\text{min}}^{n_r}),
\end{displaymath}
where $l=0, \ldots ,k+ \sum_{q=2}^r \digtc (S_{\text{min}}^{n_q})=\digtc(\dign) + \sum_{q=2}^r \digtc (S_{\text{min}}^{n_q})$ which is the disjoint union. $S_{\text{min}}^{n_1} \times S_{\text{min}}^{n_1} \times S_{\text{min}}^{n_2} \times S_{\text{min}}^{n_2} \times \cdots \times S_{\text{min}}^{n_r} \times S_{\text{min}}^{n_r} \cong \digs \times \digs$ contains all the subsets $W_l$ concerning the relation (\ref{relation}).

In the progression to the quotient space,
\begin{displaymath}
\bar{W}_{l} = \bigcup_{i+\sum_{q=2}^r j_q =l} \dfrac {(U_i \times \prod_{q=2}^r V_q)} {\sim} \subset \digp \times \digp
\end{displaymath}
is a disjoint union where $l=0, \ldots , k + \sum_{q=2}^r \digtc (S_{\text{min}}^{n_q})$. The obtained maps $\bar{\psi}_{(i , j_2 , \ldots , j_r)}$ we get continuous explicitly motion planner on $\bar{W}_{l}$ and $\bigcup_{l=0} ^{k + \sum_{q=2}^r \digtc (S_{\text{min}}^{n_q})} \bar{W}_{l}$ covers on $\digp \times \digp$. Hence, we conclude that
\begin{displaymath}
\digtc (\digp) \leq k+ \sum_{q=2}^{r} \digtc (S_{\text{min}}^{n_q}) = \digtc (\dign) + \digtc (S_{\text{min}}^{n_q}).
\end{displaymath}
\end{proof}

\begin{ex} We analyze the digital topological complexity of digital projective product space for $n_1=2$, $n_2=2$ and $r=2$. We state that 
\begin{displaymath}
\digtc(\text{d-}P) = \digtc \left( \dfrac{S_{\text{min}} ^{2} \times S_{\text{min}} ^{2}}{\sim} \right) \leq \digtc (\text{d-}P^{2}) + \digtc (S_{\text{min}} ^{2}).
\end{displaymath}

\begin{figure}[H]
\centering
\scalebox{0.08}{\includegraphics{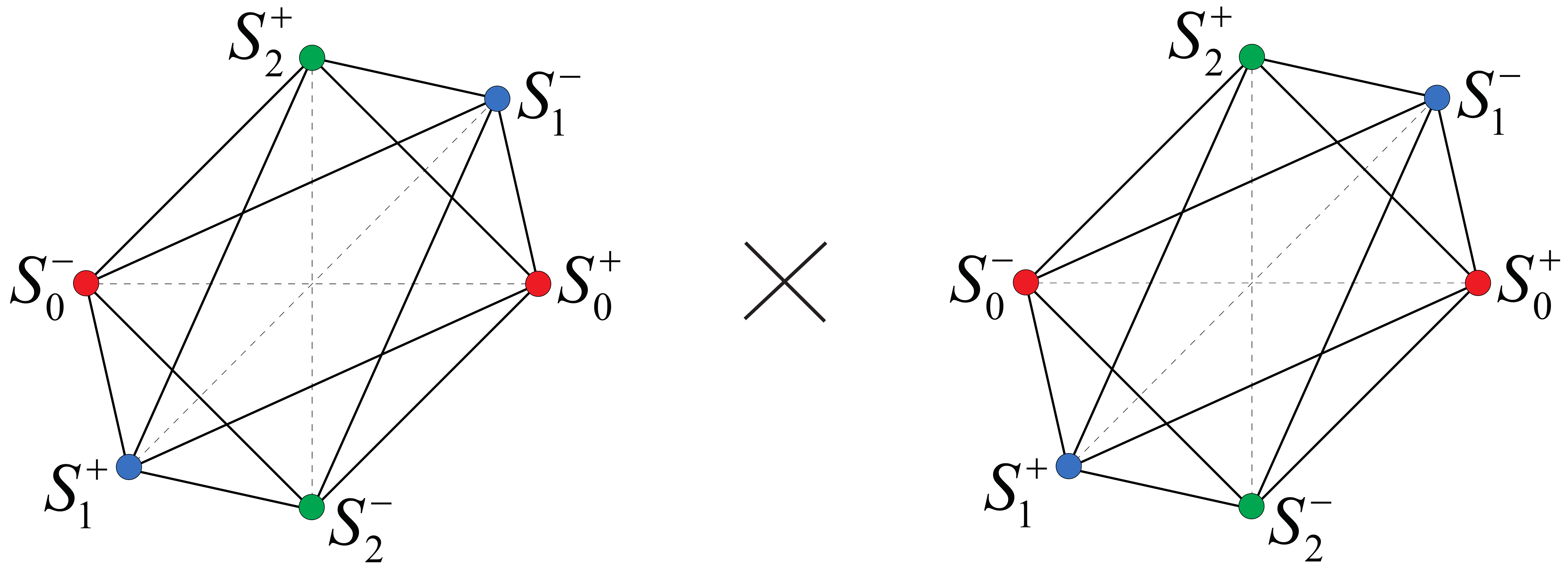}}
\caption{$S_{\text{min}} ^{2} \times S_{\text{min}} ^{2}$}
\label{sp2}
\end{figure}
We set explicitly the motion planner for the digital projective space $d$-$P^{2}=S_{\text{min}} ^{2}/\sim$ by using the characterization of digital non-singular maps that we defined and for digital sphere $S_{\text{min}} ^{2}$ that we have from the inspiration of \cite{FTY} and \cite{Far}, respectively.

We find a cover of $S_{\text{min}} ^{2} \times S_{\text{min}} ^{2} \subset \mathbb{Z}^3 \times \mathbb{Z}^3$ by processing similarly as in \cite{FTY}. The digital non-singular map $\mathbb{Z}^4 \times \mathbb{Z}^4 \rightarrow \mathbb{Z}^4$ has a restriction onto $\mathbb{Z}^3 \subset \mathbb{Z}^4$ and this provides us the digital non-singular $\theta: \mathbb{Z}^3 \times \mathbb{Z}^3 \rightarrow \mathbb{Z}^4$ with the formula 
\begin{displaymath}
\theta(u_1,v_1) =\langle u_1, v_1\rangle -
\begin{vmatrix}
u_{1_1} & u_{1_2}\\
v_{1_1} & v_{1_2}
\end{vmatrix}
i 
-
\begin{vmatrix}
u_{1_1} & u_{1_3}\\
v_{1_1} & v_{1_3}
\end{vmatrix}
j
-
\begin{vmatrix}
u_{1_2} & u_{1_3}\\
v_{1_2} & v_{1_3}
\end{vmatrix}
k,
\end{displaymath}
where $u_1=(u_{1_1}, u_{1_2}, u_{1_3}), v_1=(v_{1_1}, v_{1_2}, v_{1_3}) \in \mathbb{Z}^3$, the imaginary units $i, j, k \in \mathbb{Z}^4$, $\langle u_1, v_1 \rangle$ represents the scalar product of $u_1$ and $v_1$ and 
\begin{displaymath}
\theta(u_1,v_1) = \theta_2(u_1,v_1) + \theta_1(u_1,v_1)i + \theta_2(u_1,v_1)j + \theta_3(u_1,v_1)k.
\end{displaymath}
 We indicate that subsets are compatible with the antipodal relation on $S_{\text{min}} ^{2}$:
\begin{flalign*}
U_0 &= \left\{ (u_1,v_1) \in S_{\text{min}} ^{2} \times S_{\text{min}} ^{2} \text{ : } \theta_2 (u_1,v_1) \neq 0 \right\}, \\
U_1 &= \left\{ ((u_1,v_1) \in S_{\text{min}} ^{2} \times S_{\text{min}} ^{2} \text{ : } \theta_2 (u_1,v_1) = 0, \theta_1(u_1,v_1) \neq 0 \right\}, \\
U_2 &= \left\{ (u_1,v_1) \in S_{\text{min}} ^{2} \times S_{\text{min}} ^{2} \text{ : } \theta_2 (u_1,v_1) = 0, \theta_1(u_1,v_1) = 0, \theta_2(u_1,v_1) \neq 0 \right\}  \\
U_3 &= \left\{ (u_1,v_1) \in S_{\text{min}} ^{2} \times S_{\text{min}} ^{2} \text{ : } \theta_2 (u_1,v_1) = 0, \theta_1(u_1,v_1) = 0, \theta_2(u_1,v_1) = 0, \theta_3(u_1,v_1) \neq 0 \right\}.
\end{flalign*} 

Notice that $\bigcup_{i=0} ^3 U_i$ includes the disjoint subsets of $S_{\text{min}} ^{2} \times S_{\text{min}} ^{2}$ and that provides us a cover of $S_{\text{min}} ^{2} \times S_{\text{min}} ^{2}$.

We consider the digital path $\rho(A,B): I_{m_1} \to S_{\text{min}} ^n$ with $\rho(-A , -B) = -\rho(A,B)$ from $A$ to $B$  for except antipodal points $A,B$ $(A \neq -B)$ of digital sphere $S_{\text{min}} ^n$. Remind that $\rho(-A , -B) = -\rho(A,B)$ and that $\rho(A,A)$ is the constant digital path.

We set the map, for $0 \leq i \leq 3$, $\psi_i: U_i \to (\text{d-}P^2)^{I_{m_1+m_2}}$ by
\begin{displaymath}
\psi_i (u_1,v_1) = \begin{cases} 
[\rho(u_1,v_1)], &\text{ if } \theta_i (u_1,v_1) > 0 \\
[\rho(-u_1,v_1], &\text{ if } \theta_i (u_1,v_1) < 0.
\end{cases}
\end{displaymath}
For any $u_1 \in S_{\text{min}} ^2$, we have $\theta_2 (u_1,u_1) > 0$ and $\theta_2(u_1,-u_1)=\theta_2(-u_1,u_1)<0$. Accordingly, we have a pair $(u_1,u_1)\in U_0$ or $(-u_1,u_1)\in U_0$ for guaranteeing that $\psi_i$ is well-defined on pairs of antipodal points. This map is digitally continuous on $U_i$ and satisfies $\psi_i(u_1,v_1)=\psi_i(\pm u_1,\pm v_1)$ for $0 \leq i \leq k$ and the induced map $\bar{\psi}_i : U_i /\!\sim \to (\text{d-}P^2)^{I_{m_1+m_2}}$ gives us an explicit motion planner on $d$-$P^{2}$.

We use the following subsets of $S_{\text{min}} ^{2} \times S_{\text{min}} ^{2}$. The case that $n_q$ is even, we consider:
\begin{flalign*}
V_0 &= \left\{ (u_2, v_2) \in S_{\text{min}} ^{2} \times S_{\text{min}} ^{2} \text{ : } v_2 \neq \pm u_2 \right\} \\
V_1 &= \left\{ (u_2, v_2) \in S_{\text{min}} ^{2} \times S_{\text{min}} ^{2} \text{ : } v_2 = \pm u_2 , u_2 \neq \pm a_2 \right\}  \\
V_2 &= \left\{ (u_2, v_2) \in S_{\text{min}} ^{2} \times S_{\text{min}} ^{2} \text{ : } v_2 = \pm u_2 , u_2 = \pm a_2 \right\}.
\end{flalign*}
Here we take $a_2 = (0,0,1) \in S_{\text{min}} ^{2}$ corresponds to the vanishing point of even dimensional spheres.

We present the motion planner for even dimensional digital sphere as before:
\begin{itemize}
\item For $(u_2, v_2) \in V_0$ and for $(u_2, v_2)\in V_1 \cup V_2$, we consider the digital path $\rho(u_2, v_2)$.
\end{itemize}

\begin{itemize}
\item For $(u_2 , -u_2) \in V_1$, we use the digital meridian $\sigma: I_{m_2} \to S_{\text{min}} ^{2}$ path $\sigma(u_2 , -u_2)$ from $u_2$ to $-u_2$ in the positive direction symmetrically.
\end{itemize}

\begin{itemize}
\item For $(a_2 , -a_2)$, we fix a meridian $\sigma_0: I_{m_2} \to S_{\text{min}} ^{2}$ digital path from $a_2$ to $-a_2$ and we set $\sigma_0(-a_2 , a_2) = -\sigma_0 (a_2 , -a_2)$.
\end{itemize}

We assemble these motion planners on $U_i \times V_j \subset S_{\text{min}} ^2 \times S_{\text{min}} ^2 \times S_{\text{min}} ^2 \times S_{\text{min}} ^2$ for $j \in \{ 0,1,2 \}$ in the following way:

\underline{The motion planner on $U_i \times V_0$:} We define the map $\psi_{(i,0)}:  U_i \times V_0 \to (\text{d-}P)^{I_{m_1+m_2}}$ for $i \in \{ 0,1,2,3 \}$ by
\begin{displaymath}
\psi_{(i,0)} (u_1,v_1,u_2,v_2) =
\begin{cases}
[ \rho(u_1,v_1) , \rho(u_2,v_2) ] & \text{ if } \theta_i (u_1,v_1) > 0, \\
[ \rho(-u_1,v_1) , \rho(-u_2,v_2) ] & \text{ if } \theta_i (u_1,v_1) < 0,
\end{cases}
\end{displaymath}
where $(u_1,v_1) \in U_i$ and $(u_2,v_2) \in V_0$. As in the proof of Theorem 3.7., we consider the image of $(u_1,v_1,u_2,v_2)$ under the isomorphism and specify the image of $\psi_{(i,0)}$ as below.
\begin{itemize}[label=$\ast$]
\item If $(u_1,u_2,v_1,v_2) \in U_i \times V_0$ with $\theta_i (u_1,v_1) >0$ and $v_2 \neq \pm u_2$, then 
\begin{displaymath}
\psi_{(i,0)} (u_1,u_2,v_1,v_2) = [\rho(u_1,v_1) , \rho(u_2,v_2) ].
\end{displaymath}
\end{itemize}

\begin{itemize}[label=$\ast$]
\item For $(u_1,u_2,-v_1,-v_2) \in U_i \times V_0$, we have $\theta_i (u_1,v_1) <0$ and $v_2 \neq \pm u_2$. So, we get
\begin{displaymath}
\psi_{(i,0)} (u_1,u_2,-v_1,-v_2) = [\rho(-u_1,-v_1) , \rho(-u_2,-v_2) ] = [\rho(u_1,v_1) , \rho(u_2,v_2) ].
\end{displaymath}
\end{itemize}

\begin{itemize}[label=$\ast$]
\item For $(-u_1,-u_2,v_1,v_2) \in U_i \times V_0$, we have $\theta_i (u_1,v_1) <0$ and $v_2 \neq \pm u_2$. In that case, we have
\begin{displaymath}
\psi_{(i,0)} (-u_1,-u_2,v_1,v_2) = [\rho(u_1,v_1) , \rho(u_2,v_2) ].
\end{displaymath}
\end{itemize}

\begin{itemize}[label=$\ast$]
\item For $(-u_1,-u_2,-v_1,-v_2) \in U_i \times V_0$, we have $\theta_i (u_1,v_1) >0$ and $v_2 \neq \pm u_2$. Thus, we obtain
\begin{displaymath}
\psi_{(i,0)} (-u_1,-u_2,-v_1,-v_2) = [\rho(-u_1,-v_1) , \rho(-u_2,-v_2) ] = [\rho(u_1,v_1) , \rho(u_2,v_2) ].
\end{displaymath}
\end{itemize}

\underline{The motion planner on $U_i \times V_1$:} We define the map $\psi_{(i,1)}:  U_i \times V_1 \to (\text{d-}P)^{I_{m_1+m_2}}$ for $i \in \{ 0,1,2,3 \}$ by
\begin{displaymath}
\psi_{(i,1)} (u_1,v_1,u_2,v_2) =
\begin{cases}
[ \rho(u_1,v_1) , \sigma(u_2,v_2) ] & \text{ if } \theta_i (u_1,v_1) > 0, v_2 = -u_2 , u_2 \neq \pm a_2 \\
[ \rho(u_1,v_1) , \rho(u_2,v_2) ] & \text{ if } \theta_i (u_1,v_1) > 0, v_2 = u_2 , u_2 \neq \pm a_2 \\
[ \rho(-u_1,v_1) , \rho(-u_2,v_2) ] & \text{ if } \theta_i (u_1,v_1) < 0, v_2 = -u_2 , u_2 \neq \pm a_2 \\
[ \rho(-u_1,v_1) , \sigma(-u_2,v_2) ] & \text{ if } \theta_i (u_1,v_1) < 0, v_2 = u_2 , u_2 \neq \pm a_2
\end{cases}
\end{displaymath}

\begin{itemize}[label=$\ast$]
\item Let $(u_1,u_2,v_1,v_2) \in U_i \times V_1$ with $\theta_i (u_1,v_1) >0$, $v_2 = -u_2$ and $u_2 \neq \pm a_2$. After that, we get
\begin{displaymath}
\psi_{(i,1)} (u_1,u_2,v_1,v_2) = \psi_{(i,1)} (u_1,u_2,v_1,-u_2) = [\rho(u_1,v_1) , \sigma(u_2,-u_2) ].
\end{displaymath}
\end{itemize}

\begin{itemize}[label=$\ast$]
\item For $(u_1,u_2,-v_1,-v_2) \in U_i \times V_1$, we have $\theta_i (u_1,v_1) <0$, $v_2 = -u_2$ and $u_2 \neq \pm a_2$. Thus, we have
\begin{flalign*}
\psi_{(i,1)} (u_1,u_2,-v_1,-v_2) = \psi_{(i,1)} (u_1,u_2,-v_1,u_2) &= [\rho(-u_1,-v_1) , \sigma(-u_2,u_2) ] \\ &= [\rho(u_1,v_1) , \sigma(u_2,-u_2) ].
\end{flalign*}
\end{itemize}

\begin{itemize}[label=$\ast$]
\item For $(-u_1,-u_2,v_1,v_2) \in U_i \times V_1$, we have $\theta_i (u_1,v_1) <0$, $v_2 = -u_2$ and $u_2 \neq \pm a_2$. Hence, we obtain 
\begin{displaymath}
\psi_{(i,1)} (-u_1,-u_2,v_1,v_2) = \psi_{(i,1)} (-u_1,-u_2,v_1,-u_2) = [\rho(u_1,v_1) , \sigma(u_2,-u_2) ].
\end{displaymath}
\end{itemize}

\begin{itemize}[label=$\ast$]
\item For $(-u_1,-u_2,-v_1,-v_2) \in U_i \times V_1$, we have $\theta_i (u_1,v_1) >0$, $v_2 = -u_2$ and $u_2 \neq \pm a_2$. Thereafter, we acquire
\begin{flalign*}
\psi_{(i,1)} (-u_1,-u_2,-v_1,-v_2) = \psi_{(i,1)} (-u_1,-u_2,-v_1,u_2) &= [\rho(-u_1,-v_1) , \sigma(-u_2,u_2) ] \\ &= [\rho(u_1,v_1) , \sigma(u_2,-u_2) ].
\end{flalign*}
\end{itemize}

\begin{itemize}[label=$\ast$]
\item Let $(u_1,u_2,v_1,v_2) \in U_i \times V_1$ with $\theta_i (u_1,v_1) >0$, $v_2 = u_2$ and $u_2 \neq \pm a_2$. So, there exists
\begin{displaymath}
\psi_{(i,1)} (u_1,u_2,v_1,v_2) = \psi_{(i,1)} (u_1,u_2,v_1,u_2) = [\rho(u_1,v_1) , \rho(u_2,u_2) ].
\end{displaymath}
\end{itemize}

\begin{itemize}[label=$\ast$]
\item For $(u_1,u_2,-v-1,-v_2) \in U_i \times V_1$, we have $\theta_i (u_1,v_1) <0$, $v_2 = u_2$ and $u_2 \neq \pm a_2$. At that case, this satisfies 
\begin{flalign*}
\psi_{(i,1)} (u_1,u_2,-v_1,-v_2) = \psi_{(i,1)} (u_1,u_2,-v_1,-u_2) &= [\rho(-u_1,-v-1) , \rho(-u_2,-u_2) ] \\ &= [\rho(u_1,v_1) , \rho(u_2,u_2) ].
\end{flalign*}
\end{itemize}

\begin{itemize}[label=$\ast$]
\item For $(-u_1,-u_2,v_1,v_2) \in U_i \times V_1$, we have $\theta_i (u_1,v_1) <0$, $v_2 = u_2$ and $u_2 \neq \pm a_2$. Afterwards, this gives that
\begin{displaymath}
\psi_{(i,1)} (-u_1,-u_2,v_1,v_2) = \psi_{(i,1)} (-u_1,-u_2,v_1,u_2) = [\rho(u_1,v_1) , \rho(u_2,u_2) ].
\end{displaymath}
\end{itemize}

\begin{itemize}[label=$\ast$]
\item For $(-u_1,-u_2,-v_1,-v_2) \in U_i \times V_1$, we have $\theta_i (u_1,v_1) >0$, $v_2 = u_2$ and $u_2 \neq \pm a_2$. Therefore, this provides that
\begin{flalign*}
\psi_{(i,1)} (-u_1,-u_2,-v_1,-v_2) = \psi_{(i,1)} (-u_1,-u_2,-v_1,-u_2) &= [\rho(-u_1,-v_1) , \rho(-u_2,-u_2) ] \\ &= [\rho(u_1,v_1) , \rho(u_2,u_2) ].
\end{flalign*}
\end{itemize}

\underline{The motion planner on $U_i \times V_2$:} We define the map $\psi_{(i,2)}:  U_i \times V_2 \to (\text{d-}P)^{I_{m_1+m_2}}$ for $i \in \{ 0,1,2,3 \}$ by
\begin{displaymath}
\psi_{(i,2)} (u_1,v_1,u_2,v_2) =
\begin{cases}
[ \rho(u_1,v_1) , \sigma_0 (u_2,v_2) ] & \text{ if } \theta_i (u_1,v_1) > 0, v_2 = -u_2 , u_2 = \pm a_2 \\
[ \rho(u_1,v_1) , \rho(u_2,v_2) ] & \text{ if } \theta_i (u_1,v_1) > 0, v_2 = u_2 , u_2 = \pm a_2 \\
[ \rho(-u_1,v_1) , \rho(-u_2,v_2) ] & \text{ if } \theta_i (u_1,v_1) < 0, v_2 = -u_2 , u_2 = \pm a_2 \\
[ \rho(-u_1,v_1) , \sigma_0 (-u_2,v_2) ] & \text{ if } \theta_i (u_1,v_1) < 0, v_2 = u_2 , u_2 = \pm a_2.
\end{cases}
\end{displaymath}

\begin{itemize}[label=$\ast$]
\item Let $(u_1,u_2,v_1,v_2) \in U_i \times V_2$ with $\theta_i (u_1,v_1) >0$, $v_2 = -u_2$ and $u_2 = \pm a_2$. Then we have
\begin{displaymath}
\psi_{(i,2)} (u_1,u_2,v_1,v_2) = \psi_{(i,2)} (u_1,\pm a_2,v_1,-u_2) = [\rho(u_1,v_1) , \sigma_0 (\pm a_2,\mp a_2) ].
\end{displaymath}
\end{itemize}

\begin{itemize}[label=$\ast$]
\item For $(u_1,u_2,-v_1,-v_2) \in U_i \times V_2$, we have $\theta_i (u_1,v_1) <0$, $v_2 = -u_2$ and $u_2 = \pm a_2$. Hence, we obtain
\begin{flalign*}
\psi_{(i,2)} (u_1,u_2,-v_1,-v_2) = \psi_{(i,2)} (u_1,\pm a_2,-v_1,\pm a_2) &= [\rho(-u_1,-v_1) , \sigma_0 (\mp a_2,\pm a_2) ] \\ &= [\rho(u_1,v_1) , \sigma_0 (\pm a_2,\mp a_2) ].
\end{flalign*}
\end{itemize}

\begin{itemize}[label=$\ast$]
\item For $(-u_1,-u_2,v_1,v_2) \in U_i \times V_2$, we have $\theta_i (u_1,v_1) <0$, $v_2 = -u_2$ and $u_2 = \pm a_2$. So, we get
\begin{displaymath}
\psi_{(i,2)} (-u_1,-u_2,v_1,v_2) = \psi_{(i,2)} (-u_1,\mp a_2,v_1,\mp a_2) = [\rho(u_1,v_1) , \sigma_0 (\pm a_2,\mp a_2) ].
\end{displaymath}
\end{itemize}

\begin{itemize}[label=$\ast$]
\item For $(-u_1,-u_2,-v_1,-v_2) \in U_i \times V_2$, we have $\theta_i (u_1,v_1) >0$, $v_2 = -u_2$ and $u_2 = \pm a_2$. Thus, we state that
\begin{flalign*}
\psi_{(i,2)} (-u_1,-u_2,-v_1,-v_2) &= \psi_{(i,2)} (-u_1,\mp a_2,-v_1,\pm a_2) \\ &= [\rho(-u_1,-v_1) , \sigma_0 (\mp a_2,\pm a_2) ] \\ &= [\rho(u_1,v_1) , \sigma_0 (\pm a_2,\mp a_2) ].
\end{flalign*}
\end{itemize}

\begin{itemize}[label=$\ast$]
\item Let $(u_1,u_2,v_1,v_2) \in U_i \times V_2$ with $\theta_i (u_1, v_1) >0$, $v_2 = u_2$ and $u_2 = \pm a_2$. Then
\begin{displaymath}
\psi_{(i,2)} (u_1,u_2,v_1,v_2) = \psi_{(i,2)} (u_1,\pm a_2,v_1,\pm a_2) = [\rho(u_1,v_1) , \rho(\pm a_2,\pm a_2) ].
\end{displaymath}
\end{itemize}

\begin{itemize}[label=$\ast$]
\item For $(u_1,u_2,-v_1,-v_2) \in U_i \times V_2$, we have $\theta_i (u_1,v_1) <0$, $v_2 = u_2$ and $u_2 = \pm a_2$. Next, this gives that
\begin{flalign*}
\psi_{(i,2)} (u_1,u_2,-v_1,-v_2) = \psi_{(i,2)} (u_1,\pm a_2,-v_1,\mp a_2) &= [\rho(-u_1,-v_1) , \rho(\mp a_2,\mp a_2) ] \\ &= [\rho(u_1,v_1) , \rho(\pm a_2,\pm a_2) ].
\end{flalign*}
\end{itemize}

\begin{itemize}[label=$\ast$]
\item For $(-u_1,-u_2,v_1,v_2) \in U_i \times V_2$, we have $\theta_i (u_1,v_1) <0$, $v_2 = u_2$ and $u_2  = \pm a_2$. Hence, this provides that
\begin{displaymath}
\psi_{(i,2)} (-u_1,-u_2,v_1,v_2) = \psi_{(i,2)} (-u_1,\mp a_2,v_1,\pm a_2) = [\rho(u_1,v_1) , \rho(\pm a_2,\pm a_2) ].
\end{displaymath}
\end{itemize}

\begin{itemize}[label=$\ast$]
\item For $(-u_1,-u_2,-v_1,-v_2) \in U_i \times V_2$, we have $\theta_i (u_1,v_1) >0$, $v_2 = u_2$ and $u_2  = \pm a_2$. Accordingly, this satisfies 
\begin{flalign*}
\psi_{(i,2)} (-u_1,-u_2,-v_1,-v_2) &= \psi_{(i,2)} (-u_1,\mp a_2,-v_1,\mp a_2) \\ &= [\rho(-u_1,-v_1) , \rho(\mp a_2,\mp a_2) ] \\ &= [\rho(u_1,v_1) , \rho(\pm a_2,\pm a_2) ].
\end{flalign*}
\end{itemize}

These constructions yield the following maps by considering the quotient space since the equivalence classes are equal.

\begin{itemize}
\item $\bar{\psi}_{(i,0)} : \dfrac{\left (U_i \times V_0 \right)}{\sim} \to (\text{d-}P)^{I_{m_1+m_2}}$ is defined by
\begin{displaymath}
\bar{\psi}_{(i,0)} ([u_1,v_1,u_2,v_2]) = [\rho(u_1,v_1) , \rho(u_2,v_2) ]
\end{displaymath}
\end{itemize}

\begin{itemize}
\item $\bar{\psi}_{(i,1)} : \dfrac{\left (U_i \times V_1 \right)}{\sim} \to (\text{d-}P)^{I_{m_1+m_2}}$ is given by
\begin{displaymath}
\bar{\psi}_{(i,1)} ([u_1,v_1,u_2,v_2]) = 
\begin{cases}
[\rho(u_1,v_1) , \sigma(u_2,v_2)] & \text{ if } \theta_i (u_1,v_1) \neq 0 , v_2 = -u_2 , u_2 \neq \pm a_2 \\
[\rho(u_1,v_1) , \rho(u_2,v_2)] & \text{ if } \theta_i (u_1,v_1) \neq 0 , v_2 = u_2 , u_2 \neq \pm a_2
\end{cases}
\end{displaymath}
\end{itemize}

\begin{itemize}
\item $\bar{\psi}_{(i,2)} : \dfrac{\left (U_i \times V_2 \right)}{\sim} \to (\text{d-}P)^{I_{m_1+m_2}}$ is set by
\begin{displaymath}
\bar{\psi}_{(i,2)} ([u_1,v_1,u_2,v_2]) = 
\begin{cases}
[\rho(u_1,v_1) , \sigma_0 (u_2,v_2)] & \text{ if } \theta_i (u_1,v_1) \neq 0 , v_2 = -u_2 , u_2 \neq \pm a_2 \\
[\rho(u_1,v_1) , \rho(u_2,v_2)] & \text{ if } \theta_i (u_1,v_1) \neq 0 , v_2 = u_2 , u_2 \neq \pm a_2,
\end{cases}
\end{displaymath}
\end{itemize}
where $i \in \{ 0,1,2,3 \}$ and $\text{d-}P = \dfrac{S_{\text{min}} ^{2} \times S_{\text{min}} ^{2}}{\sim}$.

We obtain an explicit construction of $\dfrac{S_{\text{min}} ^{2} \times S_{\text{min}} ^{2}}{\sim} = \dfrac{U_0}{\sim} \cup \dfrac{U_1}{\sim} \cup \dfrac{U_2}{\sim} \cup \dfrac{U_3}{\sim}$ and $S_{\text{min}} ^{2} \times S_{\text{min}} ^{2} = V_0 \cup V_1 \cup V_2$. 
We set 
\begin{displaymath}
W_l = \bigcup_{i+j = l} (U_i \times V_j) \subset S_{\text{min}} ^{2} \times S_{\text{min}} ^{2} \times S_{\text{min}} ^{2} \times S_{\text{min}} ^{2},
\end{displaymath}
 where $i \in \{ 0,1,2,3 \}$ and $j \in \{ 0,1,2 \}$ that is the disjoint union. All subsets of $S_{\text{min}} ^{2} \times S_{\text{min}} ^{2} \times S_{\text{min}} ^{2} \times S_{\text{min}} ^{2}$ are compatible with respect to the equivalence relation. In the quotient space, 
\begin{displaymath}
\bar{W}_l = \bigcup_{i+j = l} \dfrac{U_i \times V_j}{\sim} \subset \text{d-}P \times \text{d-}P
\end{displaymath}
 is the disjoint union, where $l = 0, \ldots , 3 + \digtc{S_{\text{min}} ^{2}}$. We describe a motion planning strategy over each $\bar{W}_l$ and $\bigcup_{l = 0} ^5 \bar{W}_l$ is a cover of $\text{d-}P \times \text{d-}P$. Therefore, we conclude that
\begin{displaymath}
\digtc (\text{d-}P) \leq \digtc (\text{d-}P^2) + \digtc (S_{\text{min}} ^{2}).
\end{displaymath}
\end{ex}

\section{Conclusion}
The combination of topological structures with robotics formed a new area called topological robotics. Although robotics is a practical discipline, there is a theoretical side of the subject. The theoretical idea of robotics has been associated with many branches of mathematics. Topology has played a key role in implementing great ideas. For instance, scholars have discussed the topological problems inspired by robotics and studied motion planning problem, as well as the concept of Farber's topological complexity in detail. When the digital topological tools more specifically the notion of the digital topological complexity and related invariants are utilized in finding solutions to problems, interdisciplinary interaction will increase and hence this will open new windows in the field.

In this paper, we aim to introduce the digital projective product spaces and generalize the digital projective spaces by using digital spheres~\cite{Evako}. The main goal is to deal with the digital topological complexity and digital LS-category of these spaces. We begin with determining an upper bound for the digital LS-category and ultimately an upper bound for the digital topological complexity of the digital projective product spaces. Additionally, we define the digital non-singular map, and we use the digital non-singular map characterization inspired by~\cite{FTY} to measure the digital topological complexity of the digital projective spaces. We prove the relation between the digital topological complexity of the digital projective product spaces and the sum of the digital topological complexity of the digital projective space associated with the first digital sphere and the digital topological complexity of the remaining digital spheres. We accomplish this by constructing an explicit motion planning on these spaces. In this context, the advantages of more direct methods in the digital sense provide the results in ~\cite{FV} apart from requiring cohomological operational lower bound properties. In particular, we give examples on specific spaces to clarify our results.

This leads us to work on the digital higher topological complexity and related invariants of the digital projective product spaces, which is an open problem.
\section{Acknowledgment} The Scientific and Technological Research Council of Turkey TUBITAK-2211/A grants the first author as a fellowship.
\bibliographystyle{plain}

\end{document}